\title[\bf Arithmetic of intertwining operators]{An arithmetic property of intertwining operators \\ for $p$-adic groups}
\author{ \bf A. Raghuram} 
\date{\today}      
\subjclass[2010]{22E50; 22E55, 11F67, 11F70}
\address{A. Raghuram: Department of Mathematics\\ Indian Institute of Science Education and Research, Dr. Homi Bhabha Road, Pashan, Pune Maharashtra 411008, INDIA.} 
\email{raghuram@iiserpune.ac.in}
\numberwithin{equation}{subsection}   
\newtheorem{lemma}[equation]{Lemma}
\newtheorem{thm}[equation]{Theorem}
\newtheorem{prop}[equation]{Proposition}
\newtheorem{cor}[equation]{Corollary}
\newtheorem{defn}[equation]{Definition}
\newtheorem{exam}[equation]{Example}
\newtheorem{hyp}[equation]{Hypothesis}
\newcommand{\bfgreek}[1]{\bm{\@nameuse{up#1}}}
\let\oldtocsection=\tocsection
\let\oldtocsubsection=\tocsubsection
\let\oldtocsubsubsection=\tocsubsubsection
\renewcommand{\tocsection}[2]{\hspace{0em}\oldtocsection{#1}{#2}}
\renewcommand{\tocsubsection}[2]{\hspace{1em}\oldtocsubsection{#1}{#2}}
\renewcommand{\tocsubsubsection}[2]{\hspace{2em}\oldtocsubsubsection{#1}{#2}}
\begin{document}

 


\maketitle

 \def\R{\mathbb{R}}
\def\C{\mathbb{C}}
\def\Z{\mathbb{Z}}
\def\Q{\mathbb{Q}}
\def\A{\mathbb{A}}
\def\F{\mathbb{F}}
 \def\J{\mathbb{J}}
\newcommand\D{{\mathbb{ D}}}
 \def\L{\mathbb{L}}
\def\bG{\mathbb{G}}
\def\N{\mathbb{N}}
\def\BH{\mathbb{H}}
\newcommand\Qi{\mathbb{Q}({\bf i})}

\def\bfG{\mathbf{G}}
\def\bfB{\mathbf{B}}
\def\bfT{\mathbf{T}}
\def\bfU{\mathbf{U}}
\def\bfP{\mathbf{P}}
\def\bfM{\mathbf{M}}
\def\bfN{\mathbf{N}}
\def\bfA{\mathbf{A}}

\def\LN{{}^L\!N}
\def\LP{{}^L\!P}
\def\LM{{}^L\!M}
\def\LA{{}^L\!A}
\def\LG{{}^L\!G}
\def\Ln{{}^L\mathfrak{n}}
\def\Lg{{}^L\mathfrak{g}}

\newcommand{\vless}{\rotatebox[origin=c]{-90}{$<$}}
\newcommand{\vgreat}{\rotatebox[origin=c]{90}{$<$}}
\newcommand{\vgreater}{\rotatebox[origin=c]{90}{$\leq$}}

\newcommand\Dm{\D_\lambda}
\newcommand\Dmp{\D_{\lambda^\prime}}
 \newcommand\Dim{\D_{\io\lambda}}
 \newcommand\Dimp{\D_{\io\lambda^\prime}}
\newcommand\Dum{\D_{\ul{\lambda}}}
\newcommand\Dium{\D_{\io\ul{\lambda}}} 
\newcommand\DumN{\D_{\ul{\lambda}-N\gamma_P}} 
\newcommand\DiumN{\D_{\io\ul{\lambda}-N\gamma_P}} 
\newcommand\cal{\mathcal}
\newcommand\SMK{{\cal S}^M_{K^M_f}}
\newcommand\tMZl{\tM_{\lambda,\Z}} 
\newcommand\Gm{{\mathbb G}_m}
\newcommand\cA{\cal A}
\newcommand\cC{\cal C}
\newcommand\calL{\cal L}
\newcommand\cO{\cal O}
\newcommand\cU{\cal U}
\newcommand\cK{\cal K}   
\newcommand\cW{\cal W}     
\newcommand\HH{{\cal H}}
\newcommand\cF{\mathcal{F}} 
\newcommand\G{\mathcal{G}}
\newcommand\cB{\mathcal{B}}
\newcommand\cT{\mathcal{T}}
\newcommand\cS{\mathcal{S}}
\newcommand\cP{\mathcal{P}}
\newcommand\Exp{\mathcal{E}xp}

\newcommand\GL{{ \rm  GL}}
\newcommand\Gl{{ \rm  GL}}
\newcommand\U{{ \rm  U}}
\def\SU{{\rm SU}}
\def\S{{\bf S}}
\newcommand\Gsp{{\rm Gsp}}
\newcommand\Lie {{ \rm Lie}} 
\newcommand\Sl{{ \rm  SL}}
\newcommand\SL{{ \rm  SL}}
\newcommand\SO{{ \rm  SO}}
\newcommand\rO{{\rm  O}}
\newcommand\GU{{\rm  GU}}
\newcommand{\Sp}{{\rm Sp}}
\newcommand\Ad{{\rm Ad}}
\newcommand\Sym{{\rm Sym}}

\def\ringO{\mathcal{O}}
\def\idealP{\mathfrak{P}} 
\def\g{\mathfrak{g}}
\def\a{\mathfrak{a}}
\def\k{\mathfrak{k}}
\def\z{\mathfrak{z}}
\def\s{\mathfrak{s}}
\def\c{\mathfrak{c}}
\def\b{\mathfrak{b}}
\def\t{\mathfrak{t}}
\def\q{\mathfrak{q}}
\def\l{\mathfrak{l}}
\def\gl{\mathfrak{gl}}
\def\sl{\mathfrak{sl}}
\def\u{\mathfrak{u}} 
\def\fp{\mathfrak{p}} 
\def\p{\mathfrak{p}}   
\def\r{\mathfrak{r}}
\def\fd{\mathfrak{d}}
\def\fR{\mathfrak{R}}
\def\fI{\mathfrak{I}}
\def\fJ{\mathfrak{J}}
\def\i{\mathfrak{i}}
\def\perm{\mathfrak{S}}
\newcommand\fg{\mathfrak g}
\newcommand\fk{\mathfrak k}
\newcommand\fgK{(\mathfrak{g},K_\infty^0)}
\newcommand\gK{ \mathfrak{g},K_\infty^0 }

\newcommand\ul{\underline} 
\newcommand\tp{{  {\pi}_f}}
\newcommand\tv{ {\pi}_v}
\newcommand\ts{{ {\pi}_f}}
\newcommand\pts{ {\pi}^\prime_f}
\newcommand\usf{\ul{\pi}_f}
\newcommand\pusf{\ul{\pi}^\prime_f}
\newcommand\usvp{\ul{\pi}^\prime_v}
\newcommand\usv{\ul{\pi}_v}
\newcommand\io{{}^\iota}
\newcommand\uls{{\underline{\pi}}}

\newcommand\Spec{\hbox{\rm Spec}} 
\newcommand\SGK{\mathcal{S}^G_{K_f}}
\newcommand\SMP{\mathcal{S}^{M_P}}
 \newcommand\SGn{\mathcal{S}^{G_n}}
 \newcommand\SGp{\mathcal{S}^{G_{n^\prime}}}
\newcommand\SMPK{\mathcal{S}^{M_P}_{K_f^{M_P}}}
\newcommand\SMQ{\mathcal{S}^{M_Q}}
\newcommand\SMp{\mathcal{S}^{M }_{K_f^M}}
\newcommand\SMq{\mathcal{S}^{M^\prime}_{K_f^{M^\prime}}}
\newcommand\uSMP{\ul{\mathcal{S}}^{M_P}}
\newcommand\SGnK{\mathcal{S}^{G_n}_{K_f}}
\newcommand\SG{\mathcal{S}^G}
\newcommand\SGKp{\mathcal{S}^G_{K^\prime_f}}
\newcommand\piKK{{ \pi_{K_f^\prime,K_f}}}
\newcommand\piKKpkt{\pi^{\pkt}_{K_f^\prime,K_f}}
\newcommand\BSC{ \bar{\mathcal{S}}^G_{K_f}}
\newcommand\PBSC{\partial\SGK}
\newcommand\pBSC{\partial\SG}
\newcommand\PPBSC{\partial_P\SGK}
\newcommand\PQBSC{\partial_Q\SGK}
\newcommand\ppBSC{\partial_P\mathcal{S}^G}
\newcommand\pqBSC{\partial_Q\mathcal{S}^G}
\newcommand\prBSC{\partial_R\mathcal{S}^G}
\newcommand \bs{\backslash} 
 \newcommand \tr{\hbox{\rm tr}}
 \newcommand\ord{\text{ord}}
\newcommand \Tr{\hbox{\rm Tr}}
\newcommand\HK{\mathcal{H}^G_{K_f}}
\newcommand\HKS{\mathcal{H}^G_{K_f,\place}}
\newcommand\HKv{\mathcal{H}^G_{K_v}}
\newcommand\HGS{\mathcal{H}^{G,\place}}
\newcommand\HKp{\mathcal{H}^G_{K_p}}
\newcommand\HKpo{\mathcal{H}^G_{K_p^0}}
\newcommand\ch{{\bf ch}}

\newcommand\M{\mathcal{M}}
\newcommand\Ml{\M_\lambda}
\newcommand\tMl{\tilde{\Ml}}
\newcommand\tM{\widetilde{\mathcal{M}}}
\newcommand\tMZ{\tM_\Z}
\newcommand\tsigma{\ul{\pi}}
\newcommand \pkt{\bullet}
\newcommand\tH{\widetilde{\mathcal{H}}}
\newcommand\Mot{{\bf M}} 
\newcommand\eff{{\rm eff}}
\newcommand\Aql{A_{\q}(\lambda)}
\newcommand\wl{w\cdot\lambda}
\newcommand\wlp{w^\prime\cdot\lambda} 

\def\w{{\bf w}} 
\def\d{{\sf d}}
\def\e{{\bf e}} 
\def\x{{\tt x}}
\def\y{{\tt y}}
\def\v{{\sf v}}
\def\q{{\sf q}} 
\def\ff{{\bf f}}
\def\bk{{\bf k}}
 
\def\Ext{{\rm Ext}}
\def\Aut{{\rm Aut}}
\def\Hom{{\rm Hom}}
\def\Ind{{\rm Ind}}
\def\Asai{{\rm Asai}}
\def\aInd{{}^{\rm a}{\rm Ind}}
\def\aIndPG{\aInd_{\pi_0(P(\R)) \times P(\A_f)}^{\pi_0(G(\R)) \times G(\A_f)}}
\def\aIndQG{\aInd_{\pi_0(Q(\R)) \times Q(\A_f)}^{\pi_0(G(\R)) \times G(\A_f)}}
\def\Gal{{\rm Gal}}
\def\End{{\rm End}} 
\newcommand\Coh{{\rm Coh}}  
\newcommand\Eis{{\rm Eis}}
\newcommand\Res{\mathrm{Res}}
\newcommand\place{\mathsf{S}}
\newcommand\emb{\mathcal{I}} 
\newcommand\LB{\mathcal{L}}  
\def\Hod{{\mathcal{H}od}}
\def\Crit{{\rm Crit}}
  
 \newcommand\ip{\pi_f\circ \iota} 
\newcommand\Wp{W_{\pi_\infty\times \ip}}
\newcommand\Wpc{W^{\text{cusp}}_{\pi_\infty\times \pi_f\circ\iota}}
\newcommand\Lcusp{  L^2_{\text{cusp}}(G(\Q)\bs G(\A_f)/K_f)}
\newcommand\MiC{\tM_{\iota\circ\lambda,\C} }
\newcommand\miC{\M_{\iota\circ\lambda,\C} }
\newcommand\tpl{{}^\iota}
\newcommand\Id{\rm Id}
\newcommand\Lr{L^{\text{rat}}}
\newcommand \iso{ \buildrel \sim \over\longrightarrow} 
\newcommand\us{\ul\pi}
\newcommand\qvs{q_v^{-z}}
\newcommand \into{\hookrightarrow}
\newcommand\ppfeil[1]{\buildrel #1\over \longrightarrow}
\newcommand\eb{{}^\iota}
    
\def\bfpi{\mathbf{\Pi}}
\def\bfdelta{\mathbf{\Delta}}

\def\sI{\mathscr{I}}
\def\sU{\mathscr{U}}
\def\sJ{\mathscr{J}}

\bigskip
\tableofcontents 
 
\section{Introduction}
\label{sec:intro}

If one proposes to use the theory of Eisenstein cohomology to prove algebraicity results for the special values of automorphic $L$-functions 
as in my work with Harder \cite{harder-raghuram-book}, or its generalizations in my recent papers: \cite{raghuram}, with Bhagwat \cite{bhagwat-raghuram}, 
and with Krishnamurthy \cite{muthu-raghuram},  
then in a key step, one needs to prove that the normalised standard intertwining operator between induced representations for 
$p$-adic groups has a certain arithmetic property. 
The principal aim of this article is to address this particular local problem in the generality of the Langlands--Shahidi machinery; the main result of this 
article is invoked in \cite{bhagwat-raghuram} and \cite{muthu-raghuram}, and I expect that it will be useful in future investigations on the arithmetic properties of automorphic $L$-functions. 

\medskip

Let $F$ be a $p$-adic field, that is, a non-archimedean local field of characteristic $0$ with finite residue field $k_F$. 
Let $\bfG$ be a connected reductive group defined over $F$; assume that 
$\bfG$ is quasi-split over $F$. Fix a choice of Borel subgroup $\bfB$ of $\bfG$ defined over $F.$  Write $\bfB = \bfT \bfU$, where 
$\bfT$ is a maximal torus, and $\bfU$ the unipotent radical of $\bfB$; both defined over $F.$ 
Suppose $\bfP$ is a maximal parabolic subgroup of $\bfG$ defined over $F$, assumed to be standard, i.e., 
containing $\bfB$, and with Levi decomposition $\bfP= \bfM \bfN.$ Let $\bfA$ denote the maximal central split torus of $\bfM$. 
The $F$-points of $\bfG, \bfB, \bfT, \bfU, \bfP, \bfM,$ $\bfN,$ and $\bfA$ are denoted by $G, B, T, U, P, M, N$, and $A$, respectively.  
To emphasise the dependence on $P$, we also denote $M = M_P$, $N = N_P,$ and $A = A_P.$ 
Let $\pi$ be an irreducible admissible representation of $M_P$. (In the applications we have in mind, $\pi$ will be a local component of a global cuspidal automorphic 
representation of cohomological type.) Let $I_P^G(s, \pi)$ be the induced representation as in the Langlands--Shahidi theory \cite{shahidi-book};  
the precise definitions are recalled in the main body of this article; for the introduction suffice it to say that it is obtained by normalised parabolic induction from 
$P$ to $G$ of $\pi$ with the complex variable $s$ introduced in a delicate manner. Let ${\bf \Pi}$ be the set of simple roots of $G$ with respect to $B$ and $\alpha_P$ 
the unique simple root corresponding to $P$, and $w_0$ the unique element in the Weyl group of $G$ such that $w_0({\bf \Pi} \setminus \{\alpha_P\}) 
\subset {\bf \Pi}$ and 
$w_0(\alpha_P) < 0.$ Let $Q$ be the parabolic subgroup of $G$ associate to $P$, and ${}^{w_0}\pi$ the corresponding representation 
of $M_Q$. We denote 
$
T_{\rm st}(s, \pi) : I_P^G(s, \pi) \to I_Q^G(-s, {}^{w_0}\pi)
$ 
for the standard intertwining operator. There is a choice of measure implicit in the integral that defines the intertwining operator. 
Consider the Langlands dual groups: 
let ${}^LP^\circ = {}^LM^\circ {}^LN^\circ$ be the Levi decomposition of the parabolic subgroup ${}^LP^\circ$ of ${}^LG^\circ$ corresponding to $P$. We write 
${}^L\mathfrak{n} = \oplus_{j=1}^m r_j$ for the decomposition of ${}^L\mathfrak{n}$ under the adjoint action of ${}^LM^\circ$; it is a multiplicity free direct sum. 
Given $\pi$ and $r_j$, the local aspects of the Langlands--Shahidi machinery  
attach a local $L$-factor $L(s, \pi, \tilde{r}_j)$ when $\pi$ is generic, i.e., admits a Whittaker model. 
Denote by $k$ the {\it point of evaluation}, which, by definition, is the point such that 
$$I_P^G(s,\pi)|_{s=k} \ = \ \aInd_P^G(\pi),$$ 
where the right hand side is the algebraic (un-normalized) parabolic induction of $\pi$ to a representation of $G$; see Def.\,\ref{def:point-eval}. 
For brevity, let 
$\fI = I_P^G(k, \pi) = \aInd_P^G(\pi)$ and $\tilde\fI = I_Q^G(-k, {}^{w_0}\pi).$ 

\medskip
Now we impose an arithmetic context: suppose $E$ is a `large enough' finite Galois extension of $\Q$, and suppose there is a smooth absolutely-irreducible 
admissible representation $(\sigma, V_{\sigma, E})$ of $M_P$ on an $E$-vector space $V_{\sigma, E}$ such that for some embedding 
$\iota : E \to \C$ we have ${}^\iota\sigma \cong \pi$. The induced modules $\fI_0 = \aInd_P^G(\sigma)$ and 
$\tilde{\fI}_0 = I_Q^G(-k, {}^{w_0}\sigma)$ give $E$-structures on $\fI$ and $\tilde{\fI}$, i.e., the canonical
$\fI_0 \otimes_{E, \iota} \C \to \fI$ and $\tilde{\fI}_0 \otimes_{E, \iota} \C \to \tilde{\fI}$ are isomorphisms. 
For the parabolic subgroup $P$, assume (i) the local Langlands correspondence to be known for $M_P$; this is a serious condition which is met in a lot of examples, and widely expected to hold in all generality with prescribed desiderata, and (ii) that $P$ satisfies an integrality property: $\rho_P|_{A_P} \in X^*(A_P)$ -- see 
Sect.\,\ref{sec:framework} for notations not defined in the introduction.  
For the representation ${}^\iota\sigma$, motivated by global considerations, assume 
(i) ${}^\iota\sigma$ to be unitary up to a half-integral Tate twist, 
(ii) ${}^\iota\sigma$ to be essentially tempered, 
(iii) the point of evaluation $s=k$ to be `on the right of the unitary axis' (Def.\,\ref{def:right-of-u-axis}) that guarantees absolute convergence of the integral defining 
the standard intertwining operator at $s = k$; and 
(iv) ${}^\iota\sigma$ is generic. 
The first main result (Thm.\,\ref{thm:waldy-arithmetic}) of this article is an arithmeticity result for the standard intertwining operator at the point of evaluation $s = k$, i.e., 
there is an $E$-linear $G$-equivariant map $T_{\rm arith} : \fI_0 \to \tilde{\fI}_0$ such that 
$$
T_{\rm st}(s, {}^\iota\sigma)|_{s = k} = T_{\rm arith} \otimes_{E,\iota} \C. 
$$
The proof involves keeping track of arithmeticity in the proof of a rationality result for the standard intertwining operator for $p$-adic groups due to Waldspurger \cite[Thm.\,IV.I.I]{waldspurger}. 

\medskip
The integrality property on $P$ seems to tie up remarkably with motivic considerations; this is already very interesting 
in the example (see Sec.\,\ref{sec:example-rankin-selberg}) of $G = \GL(N)$ and $P$ maximal such that $M_P = \GL(n) \times \GL(n')$  in which case this integrality
translates to $nn' \equiv 0 \pmod{2}$ which is exactly the condition in \cite{harder-raghuram-book} imposed therein due to motivic considerations.

\medskip
Consider the normalised standard intertwining operator defined as:
\begin{equation}
\label{eqn:T-norm}
T_{\rm norm}(s, \pi) \ = \ \left(\prod_{j=1}^m \frac{L(js, \pi, \tilde r_j)}{L(js+1, \pi, \tilde r_j)}\right)^{-1} T_{\rm st}(s, \pi), \quad \Re(s) \gg 0.
\end{equation}
Continuing with all the hypotheses as above, at the point of evaluation $s=k$, the local $L$-values $L(jk, {}^\iota\sigma, \tilde r_j)$ and $L(jk+1, {}^\iota\sigma, \tilde r_j)$ are finite; and hence $T_{\rm norm}(s, \pi)|_{s = k}$ is convergent. 
We impose a `criticality' condition on $s = k$ that imposes a half-integrality property on $k$ and is entirely a function of the parabolic subgroup $P$ and 
the ambient group $G$. This condition has a global motivation in that the corresponding global $L$-values at $s = k$ are critical values 
in the sense of Deligne \cite{deligne}, and like the integrality condition on $P$ it restricts the scope of global applications; see, for example, the interesting case of $G = \Sp(2n)$ and $M_P = \GL(n)$  in Sec.\,\ref{sec:example-exterior-square} that involves the exterior square $L$-functions for $\GL(n)$. The arithmeticity result on local critical $L$-values for Rankin--Selberg $L$-functions \cite[Prop.\,3.17]{raghuram-imrn} 
gives the impetus to hypothesize that 
$$
L(s_j, {}^\iota\sigma, \, \tilde r_j) \ \in \ \iota(E), \quad s_j  \in \{jk, \, jk+1\},
$$
and furthermore this $L$-value is Galois equivariant; see Hyp.\,\ref{hyp:local-L-value}. 
Under such a hypothesis, which can be verified in many examples such as when $G$ is a classical group, 
the main result in 
Thm.\,\ref{thm:waldy-arithmetic} can be strengthened to Thm.\,\ref{thm:waldy-arithmetic-normalised} that gives an arithmeticity result for the normalised standard intertwining operator at the point of evaluation.

\medskip 
The results of this article (Thm.\,\ref{thm:waldy-arithmetic} and Thm.\,\ref{thm:waldy-arithmetic-normalised}) say that if we use Eisenstein cohomology to give a cohomological interpretation of Langlands's constant term theorem, and so attempt to prove a rationality result for ratios of critical values of automorphic $L$-functions, then at any given finite place we do not pick up any 
possibly-transcendental period. 
Suppose $\pi$ is an unramified representation, i.e., has a vector fixed under the hyper-special maximal compact subgroup of $M_P$, then both 
$\fI$ and $\tilde\fI$ are also unramified; suppose $f_0 \in \fI$ (resp., $\tilde{f}_0 \in \tilde\fI$) is the normalised spherical vector; then Langlands's generalization of the classical Gindikin--Karpelevic formula says that $T_{\rm norm}(f_0) = \tilde{f}_0.$ This implies the theorem because the $E$-structures are generated by 
these normalised spherical vectors. The real content of the theorem is that it works for any $\pi$ whether or not it is unramified.  
Whereas the global theory of Eisenstein cohomology and the special values of automorphic $L$-functions provides the context, this article is purely local ($p$-adic) in nature, and does not need the reader to be familiar with such global aspects.

\bigskip

{\small
{\it Acknowledgements:} 
I thank G\"unter Harder and Freydoon Shahidi for conversations on arithmetic properties that find their way into this article. I also thank Chandrasheel Bhagwat who acted as a sounding board while I was finalising the manuscript. 
I acknowledge support from a MATRICS research grant MTR/2018/000918 from the Science and Engineering Research Board, Department of Science and Technology, Government of India.}

\bigskip
\section{Local aspects of the Langlands--Shahidi machinery}
\label{sec:framework}

\medskip
\subsection{Induced representations and `the point of evaluation'}
\label{sec:ind-rep-pt-eval}
Let $\delta_P$ be the modulus character of $P$; it is trivial on $N_P$ and its values on $M_P$ is given by: 
$$
\delta_P(m) \ = \ |\det(\Ad_{N_P}(m))|, \quad m \in M_P, 
$$
where $\Ad_{N_P} : M_P \to \GL(\Lie(N_P))$ is the adjoint representation of $M_P$ on the Lie algebra of $N_P$, and $|\ |$ is the normalised absolute value on $F$. 
Let $Z(M_P)$ be the centre of $M_P$, $A_P$ the maximal split torus in $Z(M_P)$. 
Let $X^*(A_P) = \Hom(A_P, F^*)$ and $X^*(M_P) = \Hom(M_P, F^*)$ denote the group of rational characters of $A_P$ and $M_P$. 
Restriction from $M_P$ to $A_P$ gives an inclusion
$X^*(M_P) \hookrightarrow X^*(A_P)$, which induces an isomorphism $X^*(M_P) \otimes_\Z \Q \cong X^*(A_P) \otimes_\Z \Q.$ 
The modulus character $\delta_P$ is naturally an element of $\a_P^* := X^*(A_P) \otimes_\Z \R.$ Fix a Weyl group invariant 
inner product $(\ , \ )$ on $X^*(A_P) \otimes_\Z \R.$ 

\medskip
Let ${\bf \Delta}_G$ be the set of all roots which are naturally in $X^*(T)$;  
for the choice of the Borel subgroup $B$, let ${\bf \Delta}_G^+$ be the set of positive roots, and ${\bf \Pi}_G$ the set of simple roots. 
Let $\rho_P$ be half the sum of all positive roots whose root spaces are in 
$\Lie(N_P)$; via the restriction from $T$ to $A_P$ we have $\rho_P \in X^*(A_P) \otimes_\Z \Q.$ We have the equality: 
$$
|2 \rho_P(m)| = \delta_P(m), \quad \forall m \in M.
$$

\medskip

Let $\a_P = \Hom(X^*(A_P), \R) = \Hom(X^*(M_P), \R)$ denote the real Lie algebra of $A_P,$ and $H_P : M_P \to \a_P$ denote the Harish-Chandra homomorphism defined by: 
$$
\exp\langle \chi, H_P(m)\rangle \ := \ |\chi(m)|, \quad \forall \chi \in X^*(A_P), \ \forall m \in M.
$$
In particular, taking $\chi = \rho_P$, we get: 
$$
\exp\langle \rho_P, H_P(m) \rangle \ = \ \delta_P(m)^{1/2}, \quad \forall m \in M.
$$

\medskip
Let $(\pi, V_\pi)$ be an irreducible admissible representation of $M_P;$ the representation space $V_\pi$ is a vector space over $\C$.  
For $\nu \in \a_P^* \otimes_\R \C$, define the induced representation 
$$
I(\nu,\pi) \ := \ \Ind_P^G(\pi \otimes \exp(\langle \nu, H_P(\ )\rangle) \otimes 1_U), 
$$
where $\Ind$ means normalised parabolic induction. 
The representation space $V(\nu, \pi)$ is the vector space of all smooth (i.e., locally constant) functions $f : G \to V_\pi$ such that 
$$
f(mng) \ = \ \pi(m) \exp(\langle \nu + \rho_P, H_P(m)\rangle ) f(g), \quad \forall g \in G, \ m\in M, \ n\in N.
$$
Recall that $P$ is a maximal parabolic subgroup, defined by a simple root $\alpha_P$ which is the unique simple root whose root space is in 
$\Lie(N_P).$  
Set $\langle \rho_P, \alpha_P \rangle = 2\frac{(\rho_P, \alpha_P)}{(\alpha_P, \alpha_P)}$ and put 
$$
\gamma_P \ := \ \tilde \alpha_P \ := \ \frac{1}{\langle \rho_P, \alpha_P \rangle} \, \rho_P.
$$
In the Langlands--Shahidi machinery the notation $\tilde \alpha$ is commonly used; elsewhere in the arithmetic theory of automorphic forms 
the notation $\gamma_P$ is commonly used; it is the fundamental weight corresponding to the simple root $\alpha_P.$
For $s \in \C$, define $\nu_s$ as:
$$
\nu_s \ := \ s \tilde\alpha_P \ = \ \frac{s}{\langle \rho_P, \alpha_P \rangle} \, \rho_P.
$$
Let $I(s, \pi) := I(s\tilde\alpha_P, \pi)$, whose representation space $V(s, \pi) := V(s\tilde\alpha_P, \pi)$ consists of all locally constant 
functions $f : G \to V_\pi$ such that 
$$
f(mng) \ = \ \pi(m) \, \delta_P(m)^{\frac12 + \frac{s}{2 \langle \rho_P, \alpha_P \rangle}} \, f(g), \quad \forall g \in G, \ m\in M, \ n\in N.
$$

\begin{defn}[Point of evaluation]
\label{def:point-eval}
Define the point of evaluation $k$ as 
$$
k \ := \ - \langle \rho_P, \alpha_P \rangle, 
$$
which depends only on $P$ and $G$, and has the property that $I(k, \pi) = \aInd_P^G(\pi)$ which is the algebraic (i.e., un-normalised) parabolic induction 
from $P$ to $G$ of the representation $\pi.$ 
\end{defn}
The point of evaluation $k$ is half-integral, i.e., $k \in \Z$ or $k \in \tfrac12 + \Z$, or more succinctly $2k \in \Z$, since 
$\langle \beta, \alpha_P \rangle = 2(\beta, \alpha_P)/(\alpha_P, \alpha_P) \in \Z$ for any root $\beta$. In general, $k$ can be integral or a genuine half-integer; for example, if $G = \GL(N)$ with $N \geq 2$, and $P$ is any maximal parabolic subgroup, then $k = -N/2$; see Sec.\,\ref{sec:example-rankin-selberg}.

\medskip
\subsection{The standard intertwining operator: definition and analytic properties}
\label{sec:standard-int-op}
For the maximal parabolic subgroup $P = P_{\Theta}$, where $\Theta = {\bf \Pi} \setminus \{\alpha_P\}$, recall that $w_0$ is the unique element in the Weyl group 
such that $w_0(\Theta) \subset {\bf \Pi}$ and $w_0(\alpha_P) < 0;$ let $Q = P_{w_0(\Theta)}$ be the maximal parabolic subgroup associate to $P$. Then 
$M_Q = w_0 M_P w_0^{-1},$ and let  ${}^{w_0}\pi$ be the representation of $M_Q$ given by conjugation. 

Let $f \in I_P^G(s, \pi)$ and $g \in G$. Suppose there exists a vector $v$ in the inducing representation of $I_Q^G(-s, {}^{w_0}\pi)$, 
such that for all $\check{v}$ in the contragredient of this inducing representation 
 the integral $\int_{N_Q} \langle f(w_0^{-1} n g), \check{v} \rangle dn$ 
converges absolutely to $\langle v, \check{v} \rangle$ 
then define $\int_{N_Q} \langle f(w_0^{-1} n g) \, dn = v.$ If this is verified for all $f \in I_P^G(s, \pi)$ and all $g \in G$ then 
define an intertwining operator for $G$-modules
$$
T_{\rm st}(s) : \ I_P^G(s, \pi) \ \longrightarrow \ I_Q^G(-s, {}^{w_0}\pi)
$$ 
 by the integral
\begin{equation}
\label{eqn:intertwining-operator-shahidi}
T_{\rm st}(s)(f)(g) \ = \ \int_{N_Q} f(w_0^{-1} n g) \, dn; \quad f \in V(s, \pi), \ g \in G. 
\end{equation}
Assume, here and henceforth, that the measures in such intertwining integrals are chosen to be $\Q$-valued. 
The operator $T_{\rm st}(s)$ is denoted as $A(s, \pi, w_0)$ in Shahidi \cite[Sec.\,4.1]{shahidi-book} (see also Kim \cite[Sec.\,4.3]{kim-notes}). 
That $T_{\rm st}(s)(f) \in I_Q^G(-s, {}^{w_0}\pi)$ is verified in {\it loc.\,cit.} 
The 
following convergence statement is a special case of \cite[Prop.\,4.1.2]{shahidi-book}:  

\begin{prop}
\label{prop:abs-conv}
If $\Re(s) \gg 0$ then $T_{\rm st}(s)(f)(g)$ converges absolutely for all $g \in G$ and all $f \in V(s, \pi).$
\end{prop}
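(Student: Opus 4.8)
The plan is to run the classical Harish-Chandra--Langlands convergence argument for intertwining integrals in the form it takes in \cite[Prop.\,4.1.2]{shahidi-book}, of which the present statement is a special case. First I would conjugate by $w_0$: the substitution $\bar n = w_0^{-1} n w_0$ carries $N_Q$ isomorphically onto the unipotent radical $\bar N_P$ of the parabolic opposite to $P$, and matches Haar measures up to a positive scalar, so that $T_{\rm st}(s)(f)(g) = \int_{\bar N_P} f(\bar n\, h)\, d\bar n$ with $h = w_0^{-1} g$ and $f \in V(s,\pi)$. Thus it suffices to bound, for each fixed $h \in G$ and each $\check v$ in the contragredient of $\pi$, the integral $\int_{\bar N_P} |\langle f(\bar n\, h), \check v\rangle|\, d\bar n$.

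Next I would apply the Iwasawa decomposition $G = M_P N_P K$ inside the integrand: write $\bar n\, h = m(\bar n)\, u(\bar n)\, \kappa(\bar n)$ with $m(\bar n) \in M_P$, $u(\bar n) \in N_P$, $\kappa(\bar n) \in K$. The transformation law defining $V(s,\pi)$ gives
\[
\langle f(\bar n\, h),\, \check v\rangle \ = \ \delta_P(m(\bar n))^{\frac12 + \frac{s}{2\langle\rho_P,\alpha_P\rangle}}\, \big\langle \pi(m(\bar n))\, f(\kappa(\bar n)),\, \check v\big\rangle .
\]
Since $f$ is locally constant and $K$ is compact, $f$ takes only finitely many values on $K$, so $f(\kappa(\bar n))$ lies, for every $\bar n$, in a fixed finite-dimensional subspace of $V_\pi$. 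For a vector $w$ in the (finite) set of values and $\check v$ fixed, the matrix coefficient $m \mapsto \langle\pi(m)w, \check v\rangle$ on $M_P$ has moderate growth: by the theory of Jacquet modules (Casselman's canonical pairing) it is dominated, over the relevant range of $m$, by $C\max_\chi |\chi(m)|$ for a finite set of quasi-characters $\chi$ of $A_P$ -- the exponents of $\pi$ along $P$. Because $H_P(m(\bar n))$ runs over the closure of the negative cone as $\bar n$ ranges over $\bar N_P$ (together with the fixed, bounded perturbation coming from $h$), each $|\chi(m(\bar n))|$ is $\le \delta_P(m(\bar n))^{-c}$ for a constant $c = c(\chi) \ge 0$, so that
\[
|\langle f(\bar n\, h),\, \check v\rangle| \ \le \ C'\, \delta_P(m(\bar n))^{\frac12 + \frac{\Re(s)}{2\langle\rho_P,\alpha_P\rangle} - c_0}
\]
for constants $C', c_0 \ge 0$ independent of $s$ and $\bar n$.

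Finally I would invoke the Gindikin--Karpelevic--Langlands convergence lemma: $\int_{\bar N_P} \delta_P(m(\bar n))^{t}\, d\bar n$ converges for $t$ sufficiently large, proved by reducing to the rank-one root subgroups occurring in $\bar N_P$ and estimating each factor (as in \cite[Sec.\,4.1]{shahidi-book}). Choosing $\Re(s)$ large enough that $\tfrac12 + \tfrac{\Re(s)}{2\langle\rho_P,\alpha_P\rangle} - c_0$ exceeds that bound then yields absolute convergence, locally uniformly in $g$, for every $f \in V(s,\pi)$. The only step demanding genuine care is the moderate-growth estimate for matrix coefficients of the possibly non-unitary admissible representation $\pi$; everything else is the routine Iwasawa-coordinate bookkeeping.
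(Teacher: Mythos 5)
Your outline is correct and is exactly the argument behind the result the paper invokes: the paper gives no proof of Prop.~\ref{prop:abs-conv} beyond citing \cite[Prop.\,4.1.2]{shahidi-book} and Harish-Chandra (via \cite[Sect.\,2.2]{shahidi-ajm81} and \cite[Prop.\,12.3]{kim-notes}), and those sources argue precisely as you do --- $w_0$-conjugation onto $\bar N_P$, Iwasawa decomposition, a Jacquet-module (Casselman) bound on matrix coefficients, and the Gindikin--Karpelevic-type convergence of $\int_{\bar N_P}\delta_P(m(\bar n))^{t}\,d\bar n$ for $t$ large. The step you rightly single out as the only delicate one is indeed where the cited proofs do the real work (the geometric lemma that $m(\bar n)$ stays in a set of the form $\Omega A_P^{-}\Omega'$ with $\Omega,\Omega'$ compact, and the polynomial factors in Casselman's asymptotic expansion, which your slack in $c_0$ absorbs), so your reduction is sound.
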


If $T_{\rm st}(s)(f)(g)$ converges absolutely for all $g \in G$ and all $f \in V(s, \pi),$ then we will simply say that 
$T_{\rm st}(s)$ converges absolutely. One can be more specific about the domain of convergence in the tempered case: 

\begin{prop}
\label{prop:abs-conv-tempered}
If $\pi$ is a tempered (unitary) representation, then the standard intertwining operator $T_{\rm st}(s)$ converges absolutely for $\Re(s) > 0.$ 
\end{prop}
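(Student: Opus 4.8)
The plan is to reduce the statement to the known convergence result of Proposition~\ref{prop:abs-conv} by exploiting the fact that a tempered representation of $M_P$ is, up to no loss, an irreducible constituent of a representation induced from a discrete series on a smaller Levi, together with the standard estimate on matrix coefficients of tempered representations. First I would recall that the absolute convergence of $T_{\rm st}(s)(f)(g) = \int_{N_Q} f(w_0^{-1} n g)\, dn$ is governed entirely by the decay of $\|f(w_0^{-1} n g)\|$ as $n \to \infty$ in $N_Q$, and that for $f \in V(s,\pi)$ this decay is controlled by $\delta_P(m(n))^{1/2 + s/(2\langle\rho_P,\alpha_P\rangle)}$ times the size of $\pi$ evaluated on $m(n)$, where $m(n)$ is the $M_P$-component of $w_0^{-1} n$ in the Bruhat-type decomposition. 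When $\pi$ is unitary the naive estimate on $\pi(m(n))$ is just that it is bounded, which only yields convergence for $\Re(s) \gg 0$ as in Proposition~\ref{prop:abs-conv}; the improvement to $\Re(s) > 0$ comes from the sharper bound available when $\pi$ is \emph{tempered}.

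The key step is therefore the matrix-coefficient estimate: for $\pi$ tempered, its matrix coefficients are bounded by $\delta_{P_0^M}(\cdot)^{1/2}$ (Harish-Chandra's Schwartz-space bound, equivalently weak $L^{2+\varepsilon}$), where $P_0^M$ is a minimal parabolic of $M_P$. Plugging this into the integral, the exponent governing the integrand over $N_Q$ becomes $\langle s\tilde\alpha_P + \rho_P, H_P(m(n))\rangle$ shifted by the contribution of the tempered bound, and one checks — this is essentially a rank-one reduction along the root $\alpha_P$, comparing $\rho_P$ and $\rho_Q$ and using that $\rho_P$ is (a positive multiple of) $\tilde\alpha_P$ — that the integral $\int_{N_Q} \delta_P(m(n))^{1/2 + s/(2\langle\rho_P,\alpha_P\rangle)}\, \delta_{P_0}(m(n))^{1/2}\, dn$ converges precisely when the pairing of the relevant exponent with $\alpha_P^\vee$ is strictly positive, i.e. when $\Re(s) > 0$. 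This is exactly the $p$-adic analogue of Langlands's convergence criterion for intertwining operators, and it can be cited in this form from Shahidi~\cite[Sec.~4.1]{shahidi-book} or Silberger's work on the Plancherel measure; indeed the statement is precisely the standard input to Shahidi's definition of the local coefficient and normalized intertwining operator, so one may simply invoke \cite[Prop.~4.1.2 and the surrounding discussion]{shahidi-book}.

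The main obstacle is making the rank-one reduction fully rigorous: one must pass from the decay of $f$ along $N_Q$ to an honest convergent integral via a Gindikin--Karpelevic-type decomposition $N_Q = \prod N_\beta$ over the roots $\beta$ occurring, track how $\delta_P$ and the tempered majorant behave on each one-parameter factor, and verify that the boundary case $\Re(s) = 0$ is genuinely excluded (so the inequality is strict, matching the statement). Since $P$ is maximal this reduction is clean — there is essentially one ``direction'' $\alpha_P$ to worry about — and the argument parallels the archimedean case treated in Harder--Raghuram and the $p$-adic discussion in Waldspurger~\cite{waldspurger}. I would present this as a short deduction from Proposition~\ref{prop:abs-conv} combined with the Harish-Chandra bound, citing \cite{shahidi-book} for the precise convergence domain rather than reproving it.
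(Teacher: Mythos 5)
Your proposal is correct and takes essentially the same route as the paper, which offers no self-contained proof but simply defers to Harish-Chandra's harmonic analysis as presented in Shahidi \cite[Sect.\,2.2]{shahidi-ajm81} and Kim \cite[Prop.\,12.3]{kim-notes}; your identification of the tempered matrix-coefficient bound (Harish-Chandra's $\Xi^M$, dominated by $\delta_{P_0^M}^{1/2}$ up to polynomial factors) as the mechanism upgrading $\Re(s)\gg 0$ to $\Re(s)>0$ is precisely the content of those references. The one adjustment: for the tempered-case domain of convergence cite \cite{shahidi-ajm81} (or Silberger), not \cite[Prop.\,4.1.2]{shahidi-book}, since the latter only yields the $\Re(s)\gg 0$ statement of Prop.\,\ref{prop:abs-conv}.
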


The above convergence statements are contained in Harish-Chandra's work on harmonic analysis on $p$-adic reductive groups; 
the reader is referred to Shahidi \cite[Sect.\,2.2]{shahidi-ajm81} and the references therein; see also Kim \cite[Prop.\,12.3]{kim-notes}.

\medskip
Without worrying about convergence, let us see the shape of the standard intertwining operator at the point of evaluation $s = k.$ The domain 
of $T_{\rm st}(s)|_{s = k},$ as noted above, is $I(k, \pi) = \aInd_P^G(\pi)$. The codomain is 
$I_Q^G(-s, {}^{w_0}\pi) := I(-s\tilde\alpha_Q, {}^{w_0}\pi)$, whose representation space consists of all locally constant 
functions $f' : G \to V_{{}^{w_0}\pi} = V_\pi$ such that 
$$
f'(m'n'g') \ = \ {}^{w_0}\pi(m') \, |\delta_Q(m')|^{\frac12 - \frac{s}{2 \langle \rho_Q, \alpha_Q \rangle}} \, f'(g'), \quad \forall g' \in G, \ m'\in M_Q, \ n'\in N_Q,
$$
where ${}^{w_0}\pi(m') = \pi(w_0^{-1}m' w_0).$ 
Put $s = k = - \langle \rho_P, \alpha_P \rangle$; since $\langle \rho_Q, \alpha_Q \rangle = \langle \rho_P, \alpha_P \rangle$ we get:
$$
f'(m'u'g') \ = \ {}^{w_0}\pi(m') \, \delta_Q(m') \, f'(g').
$$
Hence, at the point of evaluation, in terms of un-normalised induction we get: 
\begin{equation}
T_{\rm st}(s)|_{s = k} : \ \aInd_P^G(\pi) \ \longrightarrow \ \aInd_Q^G({}^{w_0}\pi \otimes \delta_Q).
\end{equation}

\medskip
\subsection{Local factors and the local Langlands correspondence}
\label{sec:LLC}

A defining aspect of the Langlands program is Langlands's computation (\cite[Sect.\,5]{langlands-euler}) 
of the constant term of an Eisenstein series, which at a local unramified place boils down 
to computing the standard intertwining operator on `the' spherical vector which is a scalar multiple of the spherical vector on the other side, and this scalar multiple is 
an expression denoted $M(s)$ in {\it loc.\,cit.} Langlands says that J.\,Tits pointed out to him how to express $M(s)$ in a more 
convenient form. This is now an important ingredient in the Langlands--Shahidi machinery; see Shahidi \cite[Sect.\,2]{shahidi-annals88}.   

\medskip

Let $\LG^\circ$ be the complex reductive group which is the connected component of the Langlands dual $\LG$ of $G$; see Borel \cite[I.2]{borel-corvallis}; 
and let $\LP$ be the parabolic subgroup of $\LG$ corresponding to $P$, and 
$\LN$ its unipotent radical. 
The Levi quotient $\LM^\circ$ of $\LP^\circ$ acts on the Lie algebra $\Ln$ of $\LN^\circ$ by the adjoint action. There is a positive integer $m$ such that the set 
$\{\langle \tilde\alpha_P, \beta \rangle\}$ -- as $\beta$ varies over positive roots such that the root space $\Lg(\beta^\vee)$ of the dual root $\beta^\vee$ is in 
$\Ln$ -- is $\{1,\dots,m\}.$ For each $1 \leq j \leq m$ put 
\begin{equation}
\label{eqn:V-j}
V_j \ = \ \mbox{span of $\Lg(\beta^\vee)$ for $\beta$ such that $\langle \tilde\alpha_P, \beta \rangle = j.$}
\end{equation}
Then the action of $\LM^\circ$ on $\Ln$ stabilizes each $V_j$ and furthermore acts irreducibly on $V_j$. Denote $r_j$ the action of $\LM^\circ$ on $V_j$, and 
$\Ln = \oplus_{j=1}^m r_j$ is a multiplicity free decomposition as an $\LM^\circ$-representation. Let 
$\tilde r_j$ denote the contragredient of $r_j.$

\medskip

Given a smooth irreducible admissible representation $\pi$ that is {\it generic}, i.e., has a Whittaker model, and for $1 \leq j \leq m$, 
the local aspects of the Langlands--Shahidi machinery attaches a local $L$-factor $L(s, \pi, \tilde r_j)$ (see Shahidi~\cite{shahidi-annals90}) 
which is the inverse of a polynomial in $q^{-s}$ of degree at most $d_j := \dim(V_j);$ when $\pi$ is unramified this degree is $d_j$.

\medskip

Let $W_F$ be the Weil group of $F$, and $W_F' = W_F \times \SL_2(\C)$ the Weil--Deligne group. The local Langlands correspondence for $G$ says that 
to $\pi$ corresponds its Langlands parameter which is an admissible homomorphism 
$\phi_\pi : W_F' \to \LM_P$; see Borel~\cite[Sect.\,8]{borel-corvallis} for the requirements on the parameter $\phi_\pi.$ 
Composing with $\tilde r_j$ gives $\tilde r_j \circ \phi_\pi : W_F' \to {}^L \GL_{d_j},$ an admissible homomorphism which  
parametrizes, via the local Lanlgands correspondence for $\GL_{d_j}(F)$, a smooth irreducible admissible representation 
of  $\GL_{d_j}(F)$ that we denote $\tilde r_j(\pi).$ As in Shahidi \cite[p.\,3]{shahidi-kyoto} we will impose the working hypothesis: 
\begin{equation}
\label{eqn:LLC-L-factors}
L(s, \pi, \tilde r_j) \ = \ L(s, \tilde r_j(\pi)), 
\end{equation}
that is known in a number of instances; see the references in {\it loc.\,cit.}

\medskip
\subsection{The notion of being on the right of the unitary axis}
\label{sec:right-of-u-axis} 

Recall that $\pi$ is a smooth irreducible admissible representation of $M_P$, which is to be a local component 
of a globally generic cuspidal automorphic representation (needed by the context in which we can evoke the Langlands--Shahidi machinery), and 
keeping the generalised Ramanujan conjecture in the back of our minds (see, for example, Shahidi~\cite{shahidi-asian}), we will impose the condition that $\pi$ is essentially tempered, i.e., tempered mod the centre. 

\medskip
Let $M_P^1 = \bigcap_{\chi \in X^*(M_P)} {\rm Ker}(|\chi|_F),$ the subgroup of $M_P$ generated by all compact subgroups. For the split centre of $M_P$, 
say $A_P \cong F^* \times \cdots \times F^*$, 
and $\eta_i : A_P \to F^*$ is the projection to the $i$-th copy. Define $A_P^1 \subset A_P$ similar to $M_P^1$; we have $A_P^1 = A_P \cap M_P^1.$ 
Let $X(M_P) = \Hom(M_P/M_P^1, \C^*)$; similarly, $X(A_P)$. Restricting from $M_P$ to $A_P$ gives an isomorphism $X(M_P) \cong X(A_P).$ 
Given $\ul z = (z_1,\dots, z_l) \in \C^l$, we get 
an unramified character $A_P \to \C^*$ given by $|\eta_1|^{z_1} \otimes \cdots \otimes  |\eta_l|^{z_l}$, and via $X(M_P) \cong X(A_P)$ an unramified character of 
$M_P$ which we will denote as $\ul\eta^{\ul z}$.
Given $\pi$ as above, tempered modulo the centre means that there exist exponents 
$\ul e = (e_1, \dots, e_l) \in \R^l$ and a smooth irreducible unitary tempered representation $\pi^t$ such that 
$\pi \ \cong \ \pi^t \otimes \ul\eta^{\ul e}.$
(Keeping global applications in mind, we will impose later a hypothesis that the exponents $e_i$ are (half-)integral.) The representation $\tilde r_j(\pi)$ of $\GL_{d_j}(F)$, obtained by functoriality, is also tempered modulo its centre. A few words of explanation might be helpful. There is an exponent
$\tilde f_j = f(\tilde r_j, e_1, \dots, e_l) \in \R$, that depends on the representation $r_j$ and the exponents $e_1,\dots,e_l$, such that 
$$
\tilde r_j(\pi) \ \cong \ \tilde r_j(\pi)^t \otimes |\det |^{\tilde f_j}, 
$$
with $\tilde r_j(\pi)^t = \tilde r_j(\pi^t)$ being a unitary tempered irreducible representation of $\GL_{d_j}(F)$. Local functoriality preserves temperedness, by
the desiderata in Borel \cite[10.4, (4)]{borel-corvallis}, and the one has to keep track of the central characters for which consider the diagram: 
\begin{equation}
\label{eqn:diagram-central}
\xymatrix{
W_F' \ar[r]^{\phi_\pi} & \LM_P^\circ \ar[rr]^{\! \! \! \! \! \! \! \! \!  \! \! \! \! \! \! \tilde r_j} & & {}^L(\GL_{d_j})^\circ = \GL_{d_j}(\C) \\
 & \LA_P^\circ \ar@{^{(}->}[u] \ar[rr] & & {}^L(Z(\GL_{d_j}))^\circ = \C^* \ar@{^{(}->}[u] 
}
\end{equation}
Since the representation $\tilde r_j$ is irreducible, the centre $\LA_P^\circ \cong (\C^*)^l$ of $\LM_P^\circ$ acts via scalars, explaining the bottom horizontal arrow. 
The unramified character $\ul\eta^{\ul e}$ of $A_P$ corresponds to its Satake parameter $\vartheta_{{\ul \eta}^{\ul e}}$
in $\LA_P^\circ$; we get $\tilde r_j(\vartheta_{{\ul \eta}^{\ul e}}) \in \C^*$, which corresponds to an unramified character $|\ |^{\tilde f_j}$ of $F^*$, or 
the character $|\det |^{\tilde f_j}$ of $\GL_{d_j}(F)$,  for some exponent 
$\tilde f_j$ which, {\it a priori}, lives in $\C$, but since $e_j \in \R$ and $\tilde r_j$ is an algebraic representation, it is clear that $\tilde f_j \in \R.$ 

\begin{lemma}
\label{lem:f_j=jf_1}
With notations as above we have $\tilde f_j = j \cdot \tilde f_1.$
\end{lemma}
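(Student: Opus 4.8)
The plan is to reduce the identity to a statement about the central characters of the irreducible summands $r_j$, and then to prove that by a short weight computation in the dual group. Since $\tilde r_j$ is irreducible, its restriction to the connected centre $\LA_P^\circ$ of $\LM_P^\circ$ is a single character $\tilde\omega_j\in X^*(\LA_P^\circ)$, namely $\tilde\omega_j=-\omega_j$, where $\omega_j$ is the character by which $\LA_P^\circ$ acts on $V_j$ in the adjoint action on $\Ln$. As recalled just before the statement, the Satake parameter $\vartheta:=\vartheta_{\ul\eta^{\ul e}}$ of the unramified character $\ul\eta^{\ul e}$ lies in $\LA_P^\circ$, and $\tilde r_j(\vartheta)=\tilde\omega_j(\vartheta)\in\C^*$ is exactly the scalar that corresponds to the unramified character $|\ |^{\tilde f_j}$ of $F^*$. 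Hence, once we know
$$
\omega_j \ = \ j\,\omega_1 \quad\text{in } X^*(\LA_P^\circ),
$$
we get $\tilde r_j(\vartheta)=\tilde r_1(\vartheta)^{\,j}$, so that $|\ |^{\tilde f_j}$ is the $j$-th power of $|\ |^{\tilde f_1}$, i.e.\ $|\ |^{\tilde f_j}=|\ |^{\,j\tilde f_1}$ as characters of $F^*$; since $\tilde f_j$ and $\tilde f_1$ are real, this forces $\tilde f_j=j\,\tilde f_1$. It therefore suffices to prove the displayed identity of characters.

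To do this, observe first that $\LA_P^\circ$ acts on $V_j\subseteq\Ln\subseteq\Lg$ through the adjoint representation of $\LG^\circ$, so the connected centre $Z(\LG^\circ)^\circ$ acts trivially and $\omega_j$ factors through the torus $\LA_P^\circ/Z(\LG^\circ)^\circ$. Because $P$ is a maximal parabolic, this quotient torus is one-dimensional, so its character lattice is free of rank one, and any two of its elements are integer multiples of one another. To pin down the multiple, recall that $V_j$ is spanned by the root spaces $\Lg(\beta^\vee)$ with $\langle\tilde\alpha_P,\beta\rangle=j$, and that $\LA_P^\circ$ acts on $\Lg(\beta^\vee)$ by the restriction to $\LA_P^\circ$ of the root $\beta^\vee$ of $\LG^\circ$; thus $\omega_j$ is this common restriction. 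On the other hand $\gamma_P=\tilde\alpha_P$ lies, by its very definition, in $X^*(A_P)\otimes\Q=X_*(\LA_P^\circ)\otimes\Q$, and, viewed as a rational cocharacter of $\LA_P^\circ$, it pairs against $\omega_j=\beta^\vee|_{\LA_P^\circ}$ to give $\langle\gamma_P,\beta^\vee\rangle=\langle\tilde\alpha_P,\beta\rangle=j$. In particular $\langle\gamma_P,\omega_1\rangle=1$; since $X^*(\LA_P^\circ/Z(\LG^\circ)^\circ)\cong\Z$ and the pairing with $\gamma_P$ is injective on it, the relations $\langle\gamma_P,\omega_j\rangle=j$ and $\langle\gamma_P,\omega_1\rangle=1$ force $\omega_j=j\,\omega_1$, as wanted.

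The argument is essentially formal and I do not anticipate a genuine obstacle. The point that needs care is the bookkeeping of the identifications — $X_*(A_P)=X^*(\LA_P^\circ)$ and $X^*(A_P)=X_*(\LA_P^\circ)$, which is all we need rationally — together with the convention that $\langle\ ,\ \rangle$ here denotes the weight–coroot pairing $\langle\lambda,\mu\rangle=2(\lambda,\mu)/(\mu,\mu)$, so that the grading of $\Ln$ by $\langle\tilde\alpha_P,\beta\rangle$ on the group side is precisely the weight decomposition of $\Ln$ under the image of $\gamma_P$ in $\Lie(\LA_P^\circ)$ on the dual side.
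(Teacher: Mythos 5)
Your proof is correct and follows the same route the paper intends: the paper's own proof is the one-line remark that the claim "follows from the definition of $V_j$", i.e.\ from the fact that $V_j$ is spanned by the root spaces $\Lg(\beta^\vee)$ with $\langle\tilde\alpha_P,\beta\rangle=j$, which is exactly the central-character computation you carry out in detail. Your write-up simply supplies the bookkeeping (the character $\omega_j$ of $\LA_P^\circ$, its pairing with $\gamma_P$, and the rank-one argument modulo $Z(\LG^\circ)^\circ$) that the paper leaves implicit.
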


\begin{proof}
The proof follows from the definition of $V_j$ in \eqref{eqn:V-j} which is the representation space for $r_j$. 
(It is instructive to see this detail in the example discussed in Sec.\,\ref{sec:example-exterior-square}.)
\end{proof}

\medskip
\begin{defn}
\label{def:right-of-u-axis}
Let $\pi$ be a smooth irreducible admissible generic representation of $M_P$ that is tempered modulo the centre with exponents 
$e_1,\dots, e_l \in \R$. We say $\pi$ is on the right 
of the unitary axis with respect to the ambient group $G$, if 
$$
- \langle \rho_P, \alpha_P \rangle + \tilde f_1  > 0.
$$
\end{defn}

By Lem.\,\ref{lem:f_j=jf_1} it follows that for each $1 \leq j \leq m$ we have:
$- j \langle \rho_P, \alpha_P \rangle + \tilde f_j  > 0.$

\medskip
\begin{cor}
Let $\pi$ be a smooth irreducible admissible generic representation of $M_P$ that is tempered modulo the centre with exponents 
$e_1,\dots, e_l \in \R$, and which is on the right 
of the unitary axis with respect to the ambient group $G$, then the local $L$-values  
$$
L(jk, \pi, \tilde r_j) \ \ {\rm and} \ \ L(jk+1, \pi, \tilde r_j)
$$
are finite for each $1 \leq j \leq m$, where, recall that $k = - \langle \rho_P, \alpha_P \rangle$ is the point of evaluation. 
\end{cor}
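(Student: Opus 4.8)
The plan is to reduce the statement, by means of the working hypothesis \eqref{eqn:LLC-L-factors} together with the twisting behaviour of $\GL$-$L$-factors, to the elementary fact that the standard local $L$-factor of a \emph{unitary tempered} representation of $\GL_d(F)$ is holomorphic in the open right half-plane $\Re(s) > 0$, and then to feed in the inequality defining ``on the right of the unitary axis''. Concretely, using \eqref{eqn:LLC-L-factors} one writes $L(s, \pi, \tilde r_j) = L(s, \tilde r_j(\pi))$, and then invokes the decomposition $\tilde r_j(\pi) \cong \tilde r_j(\pi)^t \otimes |\det|^{\tilde f_j}$ recorded in Sect.\,\ref{sec:right-of-u-axis}, with $\tilde r_j(\pi)^t$ a unitary tempered irreducible representation of $\GL_{d_j}(F)$ and $\tilde f_j \in \R$; since $L(s, \tau \otimes |\det|^{a}) = L(s+a, \tau)$ for any $a \in \C$, this gives
$$
L(s, \pi, \tilde r_j) \ = \ L(s + \tilde f_j, \tilde r_j(\pi)^t).
$$
Here ``finite'' means ``holomorphic'', since $L(s, \pi, \tilde r_j)$ is the inverse of a polynomial in $q^{-s}$ and its only possible infinities are poles.

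The key input is: for a unitary tempered irreducible $\tau$ of $\GL_d(F)$, the factor $L(s, \tau)$ is holomorphic for $\Re(s) > 0$. I would prove this in the standard way: by the Bernstein--Zelevinsky classification, $\tau$ is a full (irreducible) parabolic induction $\tau_1 \times \cdots \times \tau_r$ of unitary square-integrable representations $\tau_i$ of smaller general linear groups, so that $L(s, \tau) = \prod_{i} L(s, \tau_i)$; each such $\tau_i$ is a Steinberg-type representation built from a unitary supercuspidal $\sigma_i$ of $\GL_{d_i/t_i}(F)$, with $L(s, \tau_i) = L\!\left(s + \tfrac{t_i-1}{2}, \sigma_i\right)$; and for a unitary supercuspidal $\sigma$ the factor $L(s, \sigma)$ equals $1$ unless $d = 1$ and $\sigma$ is an unramified unitary character of $F^*$, in which case its unique pole lies on the line $\Re(s) = 0$. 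Hence every pole of $L(s, \tau)$ has $\Re(s) \le 0$, which is the assertion (the strictness matters: poles on the imaginary axis do occur).

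Finally, one concludes. By Lem.\,\ref{lem:f_j=jf_1} and the remark following Def.\,\ref{def:right-of-u-axis}, the hypothesis that $\pi$ is on the right of the unitary axis with respect to $G$ yields $jk + \tilde f_j = -j\langle \rho_P, \alpha_P\rangle + \tilde f_j > 0$ for each $1 \le j \le m$, and therefore also $jk + 1 + \tilde f_j > 0$. By the previous paragraph, $L(s + \tilde f_j, \tilde r_j(\pi)^t)$ is holomorphic at $s = jk$ and at $s = jk+1$, i.e.\ $L(jk, \pi, \tilde r_j)$ and $L(jk+1, \pi, \tilde r_j)$ are finite. I expect the only real work to be in the middle step — making the holomorphy of tempered $\GL_d$-$L$-factors self-contained and checking that the twisting normalisation $L(s, \tau \otimes |\det|^{a}) = L(s+a, \tau)$ is consistent with the Langlands--Shahidi conventions for $L(s, \pi, \tilde r_j)$; once \eqref{eqn:LLC-L-factors} is granted, no genuine difficulty remains.
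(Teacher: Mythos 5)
Your proposal is correct and follows essentially the same route as the paper: the chain of equalities $L(s,\pi,\tilde r_j)=L(s,\tilde r_j(\pi))=L(s+\tilde f_j,\tilde r_j(\pi)^t)$, the fact (via Jacquet's classification and the inductive recipe for local $L$-factors) that a unitary tempered $L$-factor on $\GL_d(F)$ has no poles in $\Re(s)>0$, and the inequality $jk+\tilde f_j>0$ from Lem.\,\ref{lem:f_j=jf_1} and Def.\,\ref{def:right-of-u-axis}. You merely spell out in more detail the classification step that the paper delegates to \cite{kudla}.
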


\begin{proof}
If $\pi$ is a unitary tempered representation of $\GL_d(F)$ then the standard local $L$-factor $L(s, \pi)$ is finite if $\Re(s) > 0;$ this follows from 
Jacquet's classification of tempered representations of $\GL_d(F)$ and the well-known inductive recipe for local $L$-factors that is succinctly summarised
in Kudla \cite{kudla}. The proof follows from the equalities: 
$L(jk, \pi, \tilde r_j) = L(jk, \tilde r_j(\pi)) = L(jk + \tilde f_j, \tilde r_j(\pi)^t).$
\end{proof}

The condition of being on the right of the unitary axis is tailor-made to appeal to Shahidi's tempered $L$-functions conjecture that is now a theorem after the work of many authors (see \cite[p.\,147]{shahidi-book}) culminating in Heiermann--Opdam \cite{heiermann-opdam}.

\bigskip
\section{An arithmetic variation on a rationality result of Waldspurger}
\label{sec:var-on-waldy}

In this section we recall a rationality result of Waldspurger \cite[Thm.\,IV.1.1]{waldspurger}, and show how to reformulate it so that the statement works at an arithmetic level. Before that let us clarify some terminology that apparently causes some confusion. 

\subsection{Digression on the adjectives: rationality, algebraicity, and arithmeticity}
First of all, even among experts, there seems to be no universal agreement on the precise meaning of these adjectives. In this article, all three words are used, and it might help the reader to clarify their meanings. The word rationality has two meanings and the context usually makes it clear. First of all, a result of the form 
$``({\rm L-value})/({\rm periods}) \in \Q"$ is often called 
a {\it rationality result} for $L$-values. Then there is a common abuse of terminology and a result of the form $``({\rm L-value})/({\rm periods}) \in \overline\Q"$ is also called a rationality result. We will refer to the latter as an {\it algebraicity result} for $L$-values. A second usage of rationality, as in the context of Waldspurger's result, 
comes from algebraic geometry and means that some function or operator at hand is a rational function on an algebraic variety. To explain our usage of the word arithmetic, suppose we have an 
$L$-value at hand, which is the value at $s = s_0$ of the $L$-function $L(s, \pi)$ attached to some object $\pi$ defined over $\C$, for example, $\pi$ can a cuspidal automorphic representation. We may set up our context for the object $\pi$ to have 
an {\it arithmetic} origin, i.e., there is an object $\sigma$ defined over some coefficient field $E$, such that for some embedding of fields $\iota : E \to \C$, 
the base-change ${}^\iota\sigma$ of $\sigma$ via $\iota$ is the object $\pi.$ In such a context
a result of the form $``L(s_0, {}^\iota\sigma)/({\rm periods}) \in \iota(E)"$ is given the appellation of an {\it arithmetic result} for $L$-values. 
With this explanation of the words, the purpose of this section is to show that Waldspurger's rationality result for intertwining operator has an arithmetic origin. 
We will use the word {\it arithmeticity} for the behaviour of an arithmetic result upon changing $\iota$, or equivalently, by replacing $\iota$ by $\tau \circ \iota$ 
for any $\tau \in \Gal(\bar\Q/\Q)$; this is compatible with the usage of arithmeticity as in \cite{gan-raghuram}.

\medskip
\subsection{A rationality result of Waldspurger}
\label{sec:waldy-rationality}
In this subsection we will adumbrate the presentation in \cite[IV.1]{waldspurger}. 
Recall the notations $M_P^1$, $A_P^1$, $X(M_P),$ and $X(A_P)$ from Sect.\,\ref{sec:right-of-u-axis}. When $P$ is fixed we drop the subscript $P$ from $M_P$, $A_P$, etc. 

\medskip
Note that $X(M)$ has the structure of an 
algebraic variety over $\C$; denote by $\cB$ the $\C$-algebra of polynomial functions on $X(M)$. Let $(\pi, V)$ be a smooth admissible representation of $M$ on 
a $\C$-vector space $V$, and let 
$\cO_\C = \{\pi \otimes \chi : \chi \in X(M)\}.$ A function $f : \cO_\C \to \C$ is a polynomial if there exists $b \in \cB$ such that 
$f(\pi \otimes \chi) = b(\chi)$. For an open set $\cU \subset \cO_\C$, a function $f:\cU \to \C$ is a rational function if there exists $b_1, b_2 \in \cB$ 
such that $b_1(\chi) f(\pi \otimes \chi) = b_2(\chi)$ for all $\chi \in X(M)$ with $\pi  \otimes \chi \in \cU$ and $b_1(\chi) \neq 0.$ 

\medskip

Let $I_P^G(\pi \otimes \chi)$ be the normalised parabolically induced representation. Restriction from $G$ to its maximal compact subgroup $K$ sets up an isomorphism 
$I_P^G(\pi \otimes \chi) \cong I_{K\cap P}^K(\pi).$ Let $P'$ be a maximal parabolic subgroup of $G$ that has the same Levi subgroup $M = M_{P'} = M_P.$ For each 
$\pi \otimes \chi \in \cO_\C$ suppose we are given a $G$-equivariant operator $A(\pi \otimes \chi) : I_P^G(\pi \otimes \chi) \to I_{P'}^G(\pi \otimes \chi)$ that depends 
only on the equivalence class of $\pi \otimes \chi.$ We say that the operator $A(\pi \otimes \chi)$ is a {\it polynomial} if for all $f \in I_{K\cap P}^K(\pi)$ there exist finitely many $f_1,\dots,f_r \in I_{K\cap P'}^K(\pi)$ and $b_1,\dots, b_r \in \cB$ such that 
$A(\pi \otimes\chi)(f) \ = \ \sum_{i=1}^r b_i(\chi) f_i$ for all $\chi \in X(M).$ Furthermore, we say $A(\pi \otimes \chi)$ is {\it rational} if there exists 
$b \in \cB$, such that for all $f \in I_{K\cap P}^K(\pi)$ there exist finitely many 
$f_1,\dots,f_r \in I_{K\cap P'}^K(\pi)$ and $b_1,\dots, b_r \in \cB$ such that 
\begin{equation}
\label{eqn:rational-operator-1}
b(\chi) A(\pi \otimes\chi)(f) \ = \ \sum_{i=1}^r b_i(\chi) f_i, \quad \mbox{for all $\chi \in X(M)$ with $b(\chi) \neq 0.$}
\end{equation}

\medskip

Rationality of the intertwining operators may be formulated in another way that is used in the proof of \cite[Thm.\,IV.1.1]{waldspurger}, and which will allow us to descend 
the statement and proof to an arithmetic level to give us Thm.\,\ref{thm:waldy-arithmetic} below. For $m \in M$, let $b_m \in B$ be defined as $b_m(\chi) = \chi(m).$ 
Define $V_\cB = V \otimes_\C \cB$ on which $M$ acts as: $\pi_\cB(m)(v \otimes b) = \pi(m)v \otimes b_mb.$ For $\chi \in X(M)$, let $\cB_\chi$ be the maximal ideal
$\{b \in \cB : b(\chi) = 0\}.$ Then the action of $M$ on $V_\cB \otimes \cB/\cB_\chi$ is the representatoin $\pi \otimes \chi$. Similarly,
$I_P^G(V_\cB) = I_P^G(V) \otimes_\C \cB$. Let ${\rm sp}_\chi : \pi_\cB \to \pi_\cB \otimes \cB/\cB_\chi = \pi \otimes \chi$ denote the specialization map; same notation also for 
${\rm sp}_\chi : I_P^G(\pi_\cB) \to I_P^G(\pi \otimes \chi)$. The collection $\{A(\pi \otimes \chi)\}_{\pi \otimes \chi \in \cO_\C}$ of operators is {\it polynomial} if and only 
if there exists a $G$-equivariant homormorphism of $\cB$-modules $A_\cB : I_P^G(\pi_\cB) \to I_{P'}^G(\pi_\cB)$ such that  
the following diagram commutes  
\begin{equation}
\label{eqn:the_map_A_B}
\xymatrix{
I_P^G(\pi_\cB)\ar[rr]^{A_\cB} \ar[d]_{{\rm sp}_\chi} & & I_{P'}^G(\pi_\cB) \ar[d]^{{\rm sp}_\chi} \\ 
I_P^G(\pi \otimes \chi)\ar[rr]^{A(\pi \otimes \chi)} & & I_{P'}^G(\pi \otimes \chi)
}
\end{equation}
for all $\chi$, i.e., 
${\rm sp}_\chi \circ A_\cB = A(\pi \otimes \chi) \circ {\rm sp}_{\chi}.$ Similarly, the collection $\{A(\pi \otimes \chi)\}_{\pi \otimes \chi \in \cO_\C}$ of operators 
is {\it rational} if and only if there exists $A_\cB$ as above and an element $b \in \cB$ such that 
\begin{equation}
\label{eqn:rational-operator-2}
A(\pi \otimes \chi) \circ {\rm sp}_{\chi} \circ (1 \otimes b) \ = \ {\rm sp}_\chi \circ A_\cB.   
\end{equation}
Suppose $\tilde f \in I_P^G(\pi_\cB)$ and $A_\cB(\tilde f) = \sum_i \tilde f_i \otimes b_i$, and if ${\rm sp}_\chi$ maps $\tilde f$ to $f$, and similarly, $\tilde f_i$ to $f_i$, 
then \eqref{eqn:rational-operator-2} becomes $b(\chi) A(\pi \otimes \chi)(f) = \sum_i b_i(\chi) f_i$ as in \eqref{eqn:rational-operator-1}. We may and shall talk 
about the collection $\{A(\pi \otimes \chi)\}_{\chi \in \cU}$ of operators being rational on an open subset $\cU \subset X(M).$

\medskip

Let $N'$ be the unipotent radical of $P'$. 
Let $f \in I_P^G(V)$ and $g \in G$. Suppose there exists $v \in V$ such that for all $\check{v} \in \check{V}$ -- the representation space of the contragredient 
$\check{\pi}$ of $\pi$ -- the integral $\int_{N' \cap N \backslash N'} \langle f(n'g), \check{v} \rangle dn'$ converges absolutely to $\langle v, \check{v} \rangle$ 
then define $\int_{N' \cap N \backslash N'} f(n'g) \, dn' = v.$ If this is verified for all $f \in I_P^G(\pi \otimes \chi)$ and all $g \in G$ then 
define an intertwining operator for $G$-modules 
$J(\pi \otimes \chi) : I_P^G(\pi \otimes \chi) \to I_{P'}^G(\pi \otimes \chi)$ as: 
\begin{equation}
\label{eqn:intertwining-operator-waldy}
J(\pi \otimes \chi)(f)(g) \ = \ \int_{N' \cap N \backslash N'} f(n'g) \, dn'. 
\end{equation}
Note the similarities and differences between the integrals in \eqref{eqn:intertwining-operator-shahidi} and \eqref{eqn:intertwining-operator-waldy}.

\medskip

Let $\Sigma(A_P)$ denote the set of roots of $A_P$ in $\Lie(G)$; identify $\Sigma(A_P)$ with a subset of $\a_M^*$. Denote by $\Sigma(P)$ the 
subset of $\Sigma(A_P)$ of those roots whose root spaces appear in $\Lie(P).$
For $P'$ with the same Levi as $P$, let $\bar{P'}$ denote its opposing parabolic subgroup. (This $\bar{P}'$ is the $Q$ from before.) 
The following theorem is contained in \cite[Thm.\,IV.1.1]{waldspurger}.

\begin{thm}
\label{thm:waldy-rational}
Suppose $\pi$ is an irreducible admissible smooth representation of $M$. Then there is an open cone 
$\cU = \{\chi \in X(M) : \langle \Re(\chi), \alpha \rangle > 0, \ \forall \alpha \in \Sigma(P) \cap \Sigma(\bar{P'})\}$ of $X(M)$ such that 
$J(\pi \otimes \chi)$ is defined by the convergent integral \eqref{eqn:intertwining-operator-waldy} for $\chi \in \cU.$ 
The collection of intertwining operators 
$\{J(\pi \otimes \chi)\}_{\chi \in \cU}$ on this cone is rational. 
\end{thm}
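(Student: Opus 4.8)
The plan is to establish the two assertions of the theorem --- absolute convergence of \eqref{eqn:intertwining-operator-waldy} on the cone $\cU$, and rationality of the resulting family --- in turn, and throughout to work in the \emph{compact picture}: restriction from $G$ to the maximal compact subgroup $K$ identifies every member $I_P^G(\pi\otimes\chi)$ of the family with the single space $I_{K\cap P}^K(\pi)$, and likewise every $I_{P'}^G(\pi\otimes\chi)$ with $I_{K\cap P'}^K(\pi)$; the only dependence on $\chi$ is in the way a function on $K$ is extended to $G$, through the unramified twist $\chi$ together with the factor $\delta_P^{1/2}$, and the algebraic family is packaged by the $\cB$-module $I_P^G(\pi_\cB)$ as in \eqref{eqn:the_map_A_B}. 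What must be produced is a $G$-equivariant homomorphism of $\cB$-modules $A_\cB\colon I_P^G(\pi_\cB)\to I_{P'}^G(\pi_\cB)$ together with a single $b\in\cB$ for which \eqref{eqn:rational-operator-2} holds.

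Since $P$ and $P'$ have the common Levi $M$, one has $N'=(N'\cap N)(N'\cap\bar N)$, so the homogeneous space $N'\cap N\backslash N'$ appearing in \eqref{eqn:intertwining-operator-waldy} is identified --- with the chosen Haar measure --- with the unipotent group $N'\cap\bar N$, whose Lie algebra carries the $A_P$-roots $\Sigma(P)\cap\Sigma(\bar{P'})$ up to sign. Decomposing $n'g$ by the Iwasawa decomposition relative to $P=MN$, writing $m_P(n'g)$ for its $M$-component, and using that $f|_K$ is bounded and takes values in a fixed compact subset of $V_\pi$, one dominates $\|f(n'g)\|$ by a constant (depending on $f$) times the absolute value of the unramified character $\chi\,\omega_\pi\,\delta_P^{1/2}$ evaluated at $m_P(n'g)$, where $\omega_\pi$ is the central character of $\pi$; the convergence question then reduces to that of the classical scalar integral $\int_{N'\cap\bar N}\exp\langle\mu,H_P(\bar n)\rangle\,d\bar n$ for $\mu$ varying in an open cone of $\a_P^*$, which is known to converge once $\mu$ is sufficiently dominant on the relevant coroots --- a Harish-Chandra-type estimate (see Silberger's book, or \cite[Sections I--II]{waldspurger}). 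This yields that $\cU$ is non-empty and open, that $J(\pi\otimes\chi)$ is well defined by \eqref{eqn:intertwining-operator-waldy} for $\chi\in\cU$, and, by the usual invariance of Haar measure, that it is $G$-equivariant.

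For rationality --- the real content --- I would use the split central torus $A_P$ of $M$. Choose $a\in A_P$ whose conjugation action contracts $N'\cap\bar N$ and a compact open subgroup $\bar N_0$ of $N'\cap\bar N$ with $a\bar N_0a^{-1}\subset\bar N_0$; then $\bar N_n:=a^{-n}\bar N_0a^{n}$, $n\ge0$, is an increasing exhaustion of $N'\cap\bar N$ by compact open subgroups, and $J(\pi\otimes\chi)(f)(g)=\lim_n\int_{\bar N_n}f(n'g)\,dn'$ for $\chi\in\cU$. Each partial integral $\int_{\bar N_n}f(n'g)\,dn'$ is taken over a compact set, with $g$ ranging over the compact group $K$, so only finitely many values of the locally constant Iwasawa data occur, whence $\chi\mapsto\int_{\bar N_n}f(n'g)\,dn'$ is a Laurent polynomial on the torus $X(M)$; equivalently, each partial integral is realised by a $\cB$-module map. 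The mechanism is then a recursion for the ``shells'' $\bar N_n\setminus\bar N_{n-1}$: substituting $n'=a^{-n}m a^{n}$, pulling $a^{-n}$ out on the left by the $P$-equivariance of $f$, and invoking Schur's lemma --- $\pi(a)$ acts by a scalar $\omega_\pi(a)\in\C^\times$ because $a$ is central in $M$ and $\pi$ is irreducible admissible over $\C$ --- one finds that the $n$-th shell integral equals $\lambda(\chi)^{n}$ times an expression not depending on $\chi$, where $\lambda(\chi)=c\cdot b_a(\chi)^{-1}$ is a \emph{monomial} in $\cB$ ($c\in\C^\times$ a constant and $b_a\colon\chi\mapsto\chi(a)$ the corresponding unit), with $|\lambda(\chi)|<1$ on (a subcone of) $\cU$. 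Summing the geometric series $\sum_{n\ge0}\lambda(\chi)^{n}$ and multiplying through by $b(\chi):=1-\lambda(\chi)\in\cB$ collapses it to a finite polynomial expression; this finite expression defines $A_\cB$, and by construction ${\rm sp}_\chi\circ A_\cB=b(\chi)\cdot\bigl(J(\pi\otimes\chi)\circ{\rm sp}_\chi\bigr)$ on $\cU$, which is exactly \eqref{eqn:rational-operator-2}; that $A_\cB$ is $G$-equivariant and $\cB$-linear is then transported from the corresponding properties of the $J(\pi\otimes\chi)$ using density of $\cU$ in $X(M)$.

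The step I expect to be the main obstacle is the telescoping in the previous paragraph. After the substitution $n'=a^{-n}m a^{n}$ the integrand becomes $f(m\,a^{n}g)$ rather than $f(mg)$, so the ``base'' of the would-be geometric series itself varies with $n$ through $a^{n}g$; one must use the smoothness of $f$ and the admissibility of $\pi$ to see that, for $g$ in the compact $K$, this dependence stabilises (or cycles through finitely many values) once $n$ is large, so that the tail $\sum_{n\ge n_0}$ is genuinely geometric while the finitely many initial terms contribute only polynomials. Intertwined with this is the bookkeeping over the coset shell $\bar N_0\setminus a\bar N_0a^{-1}$ and the finitely many resulting pieces needed to conclude that a \emph{single} element $b\in\cB$ (rather than a product $\prod_i(1-\lambda_i(\chi))$, or a more general rational denominator) clears all denominators at once; verifying this is the technical heart of \cite[IV.1]{waldspurger}. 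A less computational alternative would be to appeal to Bernstein's general rationality theorem for families of $G$-invariant functionals defined by integrals depending algebraically on a parameter in a complex torus and convergent on an open set, but the explicit filtration argument is preferable here, being the one that subsequently admits the arithmetic refinement leading to Thm.\,\ref{thm:waldy-arithmetic}.
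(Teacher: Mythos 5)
Your convergence argument and your reformulation of the goal (produce a $(G,\cB)$-module map $A_\cB$ together with a single denominator $b\in\cB$ satisfying \eqref{eqn:rational-operator-2}) are both fine. The gap is in the mechanism you propose for rationality. After the substitution $n'=a^{-n}ma^{n}$ and pulling $a^{-n}$ through the $P$-equivariance, the $n$-th shell integral involves $f(ma^{n}g)$, and --- as you yourself flag --- the point $a^{n}g$ escapes every compact set, so smoothness of $f$ gives you nothing directly: the $K$- and $M$-components of the Iwasawa decomposition of $ma^{n}g$ do not stabilise, and the shell integrals do not form a single geometric series with ratio a monomial $\lambda(\chi)$. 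That picture is correct only in very special cases (e.g.\ the unramified principal series of $\GL(2)$, which is exactly the explicit computation at the end of this paper); in general the generating function of the shell integrals is governed by \emph{all} the exponents of $A_P$ acting on the Jacquet module $I_P^G(\pi\otimes\chi)_{P'}$, of which there are several (indexed by $w\in{}^{P'}W^P$), possibly with nontrivial generalized-eigenspace structure contributing terms of the form $n\lambda^{n}$ rather than $\lambda^{n}$. Your worry about whether $b$ can be taken to be a single element rather than a product $\prod_i(1-\lambda_i(\chi))$ is a red herring (a finite product is again one element of $\cB$); the real obstruction is that the linear recursion you need for the shells is precisely Casselman's asymptotics, whose proof already requires the Jacquet-module machinery, so your route is circular unless you import it.

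The proof the paper follows (Waldspurger) goes the other way from the start: Frobenius reciprocity converts the problem into constructing $j_\cB\in\Hom_{M,\cB}(I_P^G(\pi_\cB)_{P'},\,\pi_\cB)$; the Bernstein--Zelevinsky filtration $\{\cF_{w,P'}\}_{w\in{}^{P'}W^P}$ of that Jacquet module identifies the exponent sets $\Exp_w$ together with the key disjointness $\Exp_w\cap\Exp_1=\emptyset$ for $w\neq 1$; and a resultant construction then produces $R\in\cB[A_P]$ mapping the Jacquet module into $\cF_{1,P'}$ and acting by the homothety $b$ on the relevant generalized eigenspaces. This filtration-plus-resultant step is what replaces your geometric series, handles the multiple exponents and Jordan blocks at once, and is also the step that descends to $E$ in Thm.\,\ref{thm:waldy-arithmetic}. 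You name this argument in your closing sentences but do not carry it out, so as written the rationality half of your proof does not close.
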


It is the rationality assertion in the above theorem that we are particularly interested in (since convergence in our context is already guaranteed by 
Prop.\,\ref{prop:abs-conv}). We summarise the key steps of its proof, and refer the reader to \cite[IV.1]{waldspurger} for all the details and 
also for some of the notations used below even if not defined here because it would take us too far to systematically define them. 

\medskip
\begin{enumerate}
\item ({\it Reduction step}.)
We need a $G$-equivariant homomorphism of $\cB$-modules
$$
J_\cB : I_P^G(\pi_\cB)\to I_{P'}^G(\pi_\cB)
$$ 
as in \eqref{eqn:the_map_A_B}, 
that satisfies the requirement of \eqref{eqn:rational-operator-2}. 
Frobenius reciprocity for Jacquet modules and parabolic induction gives:
$$
\Hom_{G, \cB}(I_P^G(\pi_\cB), \, I_{P'}^G(\pi_\cB)) = \Hom_{M, \cB}(I_P^G(\pi_\cB)_{P'}, \, \pi_\cB), 
$$
where, $I_P^G(\pi_\cB)_{P'}$ is the Jacquet module of $I_P^G(\pi_\cB)$ with respect to $P'$ on which 
the action of $M$ is the canonical action twisted by $\delta_{P'}^{-1/2}$ to account for normalised parabolic induction. It suffices then to construct 
$$
j_\cB \in \Hom_{M, \cB}(I_P^G(\pi_\cB)_{P'}, \, \pi_\cB)
$$
such that the associated map $J_\cB$ via Frobenius reciprocity satisfies \eqref{eqn:rational-operator-2}. 

\medskip
\item ({\it Exponents in the Jacquet module of an induced representation}.)
By the well-known results of Bernstein and Zelevinskii \cite[2.12]{bernstein-zelevinsky} (see also \cite[I.3]{waldspurger}), 
the Jacquet module 
$I_P^G(\pi_\cB)_{P'}$ is filtered by $(M,\cB)$-submodules $\{\cF_{w,P'}\}_{w \in {}^{P'}W^P},$ indexed by a certain totally ordered set ${}^{P'}W^P$ of representatives in the Weyl group, such that for the successive quotients we have an isomorphism
$$
q_w : \cF_{w,P'}/\cF_{w^+,P'} \to I_{M \cap w\cdot P}^M(w \cdot V_{\cB, M \cap w^{-1}\cdot P'}), 
$$
the right hand side being a parabolically induced module of $M$.  Consider these successive quotients for the action of the split centre $A_P$ of $M$; and let $\Exp_w$ 
be the set of exponents, which are characters $A_P \to \cB^\times$, that appear in the (co-)domain of $q_w.$ 
We may suppose that $1 \in {}^{P'}W^P;$  
the image of $q_1$ is $V_\cB$. For any $w \in {}^{P'}W^P$, if $w \neq 1$ then $\Exp_w \cap \Exp_1 = \emptyset;$ see \cite[p.\,280]{waldspurger}.

\medskip
\item ({\it Killing all sub-quotients except one}.) 
Using the theory of resultants, Waldspurger constructs $R \in \cB[A_P]$ and $b \in \cB$ such that $R$ maps the Jacquet module $I_P^G(\pi_\cB)_{P'}$ into 
$\cF_{1,P'}$ and on each of the generalised eigenspace for $\mu \in \Exp_1$ appearing in $\cF_{1,P'} / \cF_{1^+,P'} \otimes {\rm Frac}(\cB)$ it acts 
as homothety by the element $b.$ The required element $j_\cB$ as in (1) is the composition of $R$ followed by  
$\cF_{1,P'} \to \cF_{1,P'} / \cF_{1^+,P'} \stackrel{q_1}{\longrightarrow} V_\cB.$ 
\end{enumerate}

 \medskip
 \subsection{An arithmetic variant of Thm.\,\ref{thm:waldy-rational}}

Let $E$ be a `large enough' finite Galois extension of $\Q.$ 
The meaning of large enough will be explained in context. Let $X_E(M) = \Hom(M/M^1, E^*)$; similarly, $X_E(A)$. 
Restriction from $M$ to $A$ gives an isomorphism $X_E(M) \cong X_E(A).$ If $A = F^* \times \cdots \times F^*$, $l$-copies, then 
$A/A^1 = \varpi_F^\Z \times \cdots \times \varpi_F^\Z,$ with $\varpi_F^\Z$ being the multiplicative infinite cyclic group generated by the 
uniformizer $\varpi_F.$ Also, $X_E(A) = E^* \times \cdots \times E^*$, where $\ul t = (t_1, \dots, t_l) \in E^* \times \cdots \times E^*$ corresponds 
to the character $\chi_{\ul t}$ that maps $\ul a = (a_1,\dots,a_l) \in A$ to $\prod t_i^{{\rm ord}_F(a_i)}.$ An embedding of fields 
$\iota : E \to \C$, gives a map $\iota_* : X_E(M) \to X_\C(M)$ where $\iota_*\chi = \iota \circ \chi.$ The following diagram might help the reader:
$$
\xymatrix{
X_\C(A)  = \Hom(A/A^1,\C^*) \ar[r]& \C^* \times \cdots \times \C^* & \ar[l]\C/(\Z \cdot \tfrac{2\pi i}{\log(q)}) \times \cdots \times \C/(\Z \cdot \tfrac{2\pi i}{\log(q)}) \\ 
X_E(A)  = \Hom(A/A^1,E^*)  \ar[r] \ar[u]^{\iota_*}& E^* \times \cdots \times E^* \ar[u]^{\iota \times \cdots \times \iota}& 
}
$$
For $\ul s =(s_1,\dots,s_l) \in \C/(\Z \cdot \tfrac{2\pi i}{\log(q)}) \times \cdots \times \C/(\Z \cdot \tfrac{2\pi i}{\log(q)})$ put 
$\ul w := q^{\ul s}$, i.e., $\ul w = (w_1,\dots,w_l) = (q^{s_1},\dots, q^{s_l}) \in \C^* \times \cdots \times \C^*$ that corresponds to the character $\chi_{\ul w}$ of $A$ given by $\ul a \mapsto \prod_i w_i^{{\rm ord}_F(a_i)}.$ 
Note that $X_E(M)$ has the structure of an 
algebraic variety over $E$; denote by $\cB_E(M)$ the $E$-algebra of polynomial functions on $X_E(M)$; then $\cB_E(M) = E[t_1,t_1^{-1},\dots,t_l,t_l^{-1}].$ 
Similarly, $\cB_\C(M) = \C[w_1,w_1^{-1},\dots, w_l,w_l^{-1}] = \C[q^{s_1}, q^{-s_1},\dots, q^{s_l}, q^{-s_l}].$
Base-change via the embedding $\iota$ gives: $\cB_E(M) \otimes_{E,\iota} \C = \cB_\C(M).$ 
To homogenise with the notations of \cite{waldspurger} as used in Sec.\,\ref{sec:waldy-rationality}, 
abbreviate $X_\C(M)$ and $\cB_\C(M)$ as $X(M)$ and $\cB$, respectively.

\medskip

\subsubsection{Hypotheses we impose on a representation in the main result have an arithmetic origin}

Let $(\sigma, V_{\sigma, E})$ be a smooth absolutely irreducible admissible representation of $M$ over an $E$-vector space $V_{\sigma, E}.$ 
For an embedding of fields $\iota : E \to \C$, we have 
the irreducible admissible representation ${}^\iota\sigma$ of $M$ on the $\C$-vector space $V_{{}^\iota\sigma} := V_{\sigma, E} \otimes_{E, \iota} \C$. We may apply 
the considerations of Sect.\,\ref{sec:framework} to $({}^\iota\sigma, V_{{}^\iota\sigma}).$ We explicate below all 
the hypotheses we impose on the representation ${}^\iota\sigma$ in the main result Thm.\,\ref{thm:waldy-arithmetic}; these hypotheses are motivated by our global applications, and are expected to have an arithmetic origin. 

\medskip
The global context of a cohomological cuspidal automorphic representation suggests, via purity considerations, the following hypothesis on $\sigma$. Recall that 
for exponents $\ul e = (e_1,\dots,e_l) \in \R^l$, by $\ul\eta^{\ul e} \in X(A) = X(M)$ defined as: 
$\ul\eta^{\ul e}(\ul a) = \prod_i |a_i|^{e_i}$ for $\ul a = (a_1,\dots,a_l) \in A.$  

\begin{hyp}[Arithmeticity for half-integral unitarity]
\label{hyp:half-int-unitary}
Let $(\sigma, V_{\sigma, E})$ be a smooth absolutely irreducible admissible representation of a reductive $p$-adic group $M$ over an $E$-vector space $V_{\sigma, E}.$  
If for one embedding $\iota : E \to \C$, there exists an $l$-tuple of integers $\ul {\sf w} = ({\sf w}_1,\dots, {\sf w}_l)$ such that 
the representation ${}^\iota\sigma \otimes \ul \eta^{\ul{\sf w}/2}$ is unitary then for every embedding $\iota : E \to \C$, 
the representation ${}^\iota\sigma \otimes \ul \eta^{\ul {\sf w}/2}$ is unitary. 
\end{hyp}

The proof is easy and we leave it to the reader. 
It makes sense to call a $\sigma$ satisfying the above hypothesis as {\it half-integrally unitary}.

\begin{hyp}[Arithmeticity for essential-temperedness]
\label{hyp:arith-temp}
Let $(\sigma, V_{\sigma, E})$ be a smooth, absolutely irreducible, admissible, half-integrally unitary representation of a reductive $p$-adic group $M$ over an $E$-vector space $V_{\sigma, E}.$  
If for one embedding $\iota : E \to \C$ the representation ${}^\iota\sigma$ is essentially tempered, then for every embedding $\iota : E \to \C$ the representation 
${}^\iota\sigma$ is essentially tempered. 
\end{hyp}

\begin{proof}[Proof of Hyp.\,\ref{hyp:arith-temp} for $\GL_n(F)$]
For $\GL_n(F)$ this follows from the considerations in Clozel \cite{clozel} while using Jacquet's classification of tempered representations \cite{jacquet}; 
such a proof is well-known to experts and so we will just sketch the details. The reader is also referred to \cite[Sect.\,9.2]{prasad-raghuram} for a summary 
of the classification of tempered representations that we will use below. For a representation $\pi$ of $\GL_n(F)$ and $t \in \R$, $\pi(t)$ denotes $\pi \otimes |\ |^t$. 

\begin{enumerate}
\item Any tempered representation $\pi$ of $G = \GL_n(F)$ is fully induced from discrete series representations; it is of the form: 
$$
\pi = \Ind_{P_{n_1,\dots,n_r}(F)}^{G}(\pi_1\otimes \cdots \otimes \pi_r), 
$$
where $\pi_i$ is a discrete series representation of $\GL_{n_i}(F)$; $\sum_i n_i = n$; $P_{n_1,\dots,n_r}(F)$ is the parabolic subgroup of $G$ with Levi subgroup 
$\GL_{n_1}(F) \times \cdots \times \GL_{n_r}(F)$. 
\smallskip
\item A discrete series representation $\pi_i$ of $\GL_{n_i}(F)$ is of the form 
$$
\pi_i \ = \ Q(\Delta(\sigma_i, b_i)),
$$
where $n_i = a_ib_i$, with $a_i, b_i \in \Z_{\geq 1}$; 
$\sigma_i$ is a supercuspidal representation of $\GL_{a_i}(F)$ such that $\sigma_i(\frac{b_i-1}{2})$ is unitary, 
and $Q(\Delta(\sigma_i, b_i))$ is the unique irreducible quotient of a parabolically
induced representation: 
$$
\Ind_{P_{b_i,\dots,b_i}(F)}^{\GL_{n_i}(F)}(\sigma_i \otimes \sigma_i(1) \otimes \cdots \otimes \sigma_i(b_i-1))
 \ \twoheadrightarrow \ Q(\Delta(\sigma_i, b_i)). 
$$
\end{enumerate}
In both the steps the parabolic induction used is normalised induction which is not, in general, Galois equivariant. As in Clozel \cite{clozel}, we may force Galois equivariance by 
considering a half-integral Tate twisted version of induction. Using the notations of (1), but letting for the moment $\pi_i$ be any irreducible admissible representation of $\GL_{n_i}(F)$, define: 
$$
{}^{T}\Ind_{P_{n_1,\dots,n_r}(F)}^{G}(\pi_1\otimes \cdots \otimes \pi_r) \ := \ 
\Ind_{P_{n_1,\dots,n_r}(F)}^{G}\left(\pi_1(\frac{1-n_1}{2})\otimes \cdots \otimes \pi_r(\frac{1-n_r}{2})\right)(\frac{n-1}{2}).
$$
Suppose $\tau \in \Aut(\C)$; then one may verify that 
$$
{}^\tau({}^{T}\Ind_{P_{n_1,\dots,n_r}(F)}^{G}(\pi_1\otimes \cdots \otimes \pi_r)) \ = \ {}^{T}\Ind_{P_{n_1,\dots,n_r}(F)}^{G}({}^\tau\pi_1\otimes \cdots \otimes {}^\tau\pi_r).
$$
Define a quadratic character of $F^*$ as $\varepsilon_\tau := (\tau \circ |\ |^{1/2})/|\ |^{1/2};$ it is trivial if and only if $\tau$ fixes $q^{1/2},$ where $q$ is the cardinality of the residue field of $F$. Then: 
$$
{}^\tau\Ind_{P_{n_1,\dots,n_r}(F)}^{G}(\pi_1\otimes \cdots \otimes \pi_r) \ = \ 
\Ind_{P_{n_1,\dots,n_r}(F)}^{G}((\pi_1\otimes \varepsilon_\tau^{n-n_1}) \otimes \cdots \otimes (\pi_r \otimes \varepsilon_\tau^{n-n_r})).
$$ 
Similarly, using  Lem.\,3.2.1 and the few lines following that lemma in \cite{clozel}, we have: 
$$
{}^\tau Q(\Delta(\sigma_i, b_i)) \ = \ Q(\Delta({}^\tau\sigma_i \otimes \varepsilon_\tau^{a_i(b_i-1)}, b_i)). 
$$
Of course each ${}^\tau\sigma_i$ is supercuspidal and so also is any of its quadratic twists; furthermore, 
$({}^\tau\sigma_i \otimes \varepsilon_\tau^{a_i(b_i-1)})(\frac{b_i-1}{2})$
is unitary. Hence the $\tau$-conjugate of the tempered representation $\pi$ is tempered. 

\smallskip
Using the notations in the hypothesis, take $\pi = {}^\iota\sigma$, and if $\iota' : E \to \C$ is any other embedding then take $\tau \in \Aut(\C)$ such that 
$\iota' = \tau \circ \iota$. By assumption $\pi = \pi^t \otimes |\ |^{{\sf w}/2}$ for a unitary tempered representation and an integral exponent ${\sf w}$. Then 
\begin{equation}
\label{eqn:tau-pi}
{}^\tau\pi \ = \ {}^\tau\pi^t \otimes (\tau\circ |\ |^{{\sf w}/2}) \ = \ 
({}^\tau\pi^t \otimes \varepsilon_\tau^{\sf w})  
\otimes |\ |^{{\sf w}/2}; 
\end{equation}
By the above argument ${}^\tau\pi^t$ is tempered, and hence so also is 
${}^\tau\pi^t \otimes \varepsilon_\tau^{\sf w}$. 
\end{proof}

\medskip

\begin{proof}[Remarks on the proof of Hyp.\,\ref{hyp:arith-temp} for classical groups] 
For classical groups, using similar argument as in the case of $\GL_n(F)$, and result for $\GL_n(F)$, 
a proof follows from M\oe glin and Tadic's classification \cite{moeglin-tadic} for discrete series and tempered representations. The proof is tedious. We will sketch the argument for even-orthogonal groups. 

Consider $G = \rO_{2n}(F) = \{g \in \GL_{2n}(F): {}^tg \cdot J \cdot g = J\}$ the split even orthogonal group of rank $n$, 
where $J_{i,j} = \delta(i, 2n-j+1).$ Suppose $n = a_1+\cdots + a_q + n_0$, with $a_1,\dots, a_q \geq 1$ and $n_0 \geq 0,$ 
and  
$P_{(a_1,\dots,a_q; n_0)}$ is the parabolic subgroup of $\rO_{2n}(F)$ with Levi subgroup 
$$
M_{(a_1,\dots,a_q; n_0)} = \GL_{a_1}(F) \times \dots \times \GL_{a_q}(F) \times \rO_{2n_0}(F).
$$ 
Let $\pi_0$ be a discrete series representation of $\rO_{2n_0}(F)$ and $\theta_j$ an essentially discrete series representation of $\GL_{a_j}(F)$. For brevity, let 
$$
\theta_1 \times \cdots \times \theta_q \rtimes \pi_0 \ := \Ind_{P_{(a_1,\dots,a_q; n_0)}}^G(\theta_1 \otimes \cdots \otimes \theta_q \otimes \pi_0).
$$ 
This induced representation is a multiplicity-free direct sum of 
tempered representations (see M\oe glin--Tadic \cite[Thm.\,13.1]{moeglin-tadic} and Atobe--Gan \cite[Desideratum 3.9, (6)]{atobe-gan}). Suppose $\pi$ is one such tempered representation: $\pi \hookrightarrow \theta_1 \times \cdots \times \theta_q \rtimes \pi_0$. 
Suppose $m_{h_1,\dots, h_q,x} \in M$ with $h_j \in \GL_{a_j}(F)$ and $x \in \rO_{2n_0}(F)$, then the absolute-value of the 
determinant of the adjoint action of $m_{h_1,\dots, h_q,x}$ on the Lie algebra of the unipotent radical of $P$ is given by: 
\begin{multline*}
\delta_P(m_{h_1,\dots, h_q,x}) =  
\left(|\det(h_1)|^{2n- 2a_1}|\det(h_2)|^{2n- 2(a_1+a_2)}\dots |\det(h_q)|^{2n_0}\right) \cdot \\
\cdot \left( |\det(h_1)|^{a_1-1} |\det(h_2)|^{a_2-1} \dots |\det(h_q)|^{a_q-1} \right).
\end{multline*}
Using this, for $\tau \in \Aut(\C),$ one may verify that: 
\begin{equation}
\label{eqn:tau-pi-O-2n}
{}^\tau\pi \ \hookrightarrow \ 
{}^\tau(\theta_1 \times \cdots \times \theta_q \rtimes \pi_0) \ = \ 
({}^\tau\theta_1 \otimes \varepsilon_\tau^{a_1-1}) \times \cdots \times ({}^\tau\theta_q \otimes \varepsilon_\tau^{a_q-1}) \rtimes {}^\tau\pi_0. 
\end{equation}
By appealing to the above proof for $\GL_{n_0}(F)$, we know that each ${}^\tau\theta_j$, and so also its quadratic twist  
${}^\tau\theta_1 \otimes \varepsilon_\tau^{a_1-1}$, is an essentially discrete series representation. Hence, proof of arithmeticity for tempered 
representation of $\rO_{2n}(F)$ boils down to proving arithmeticity for discrete series representation of $\rO_{2n_0}(F)$. 

\medskip

For the discrete series representation $\pi_0$ of $\rO_{2n_0}(F)$, there exist $a$ and $n_1$ such that $n_0 = a + n_1$, and 
there exist an essentially discrete series representation $\theta$ of $\GL_a(F)$ and a discrete series representation $\pi_1$ of 
the smaller even-orthogonal group $\rO_{2n_1}(F)$ such that $\pi_0$ is one of two possible subrepresentations of $\theta \rtimes \pi_1$; and 
both these sub-representations are in the discrete series; this being the crux of \cite{moeglin-tadic}. Then, as above
${}^\tau\pi \hookrightarrow {}^\tau\theta \otimes \varepsilon_\tau^{a-1} \otimes {}^\tau\pi_1$. An induction argument (see \cite[p.\,721]{moeglin-tadic}) 
concludes the proof as 
the reduction to a smaller even-orthogonal group ends with the case of $\pi_1$ being a supercuspidal representation (in {\it loc.\,cit.}\ called the weak cuspidal support
of $\pi_0$), and clearly conjugation by 
$\tau$ preserves supercuspidality as it leaves the support of a matrix coefficient unchanged. 
\end{proof}

\medskip
It is an interesting problem to prove this for a general $p$-adic group. Assuming that Hyp.\,\ref{hyp:arith-temp} is true, we can then formulate another hypothesis: 

\medskip
\begin{hyp}[Arithmeticity for being on the right of the unitary axis]
\label{hyp:arith-on-the-right}
Let $(\sigma, V_{\sigma, E})$ be a smooth, absolutely irreducible, admissible, half-integrally unitary, essentially tempered representation of a reductive $p$-adic group $M$ over an $E$-vector space $V_{\sigma, E}.$ 
If for one embedding $\iota : E \to \C$ the representation ${}^\iota\sigma$ is to the right of the unitary axis, then for every embedding $\iota : E \to \C$ the 
representation ${}^\iota\sigma$ is to the right of the unitary axis.  
\end{hyp}

\medskip
For $\GL_n(F)$ this follows from \eqref{eqn:tau-pi} since the exponent for $\pi$ and ${}^\tau\pi$ are equal. Similarly, the above hypothesis will follow from 
Hyp.\,\ref{hyp:arith-temp} that the half-integral exponents $\ul {\sf w}/2$ for 
${}^\iota\sigma$ are independent of $\iota$; in particular the exponent $f_1$ of $\tilde r_1({}^\iota\sigma)$ would be independent of $\iota$.

\medskip
\begin{lemma}[Arithmeticity for genericity]
Let $(\sigma, V_{\sigma, E})$ be a smooth absolutely irreducible admissible representation of a reductive quasi-split $p$-adic group $M$ over an $E$-vector space $V_{\sigma, E}.$ 
If for one embedding $\iota : E \to \C$ the representation ${}^\iota\sigma$ is generic, then for every embedding $\iota : E \to \C$ the representation 
${}^\iota\sigma$ is generic.
\end{lemma}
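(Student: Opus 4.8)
The plan is to unravel the definition of genericity and reduce it to a statement about the dimension of a space of Whittaker functionals that is stable under field automorphisms. Recall that $M$ is quasi-split, so we may fix a non-degenerate character $\psi$ of the unipotent radical $U_M$ of a Borel subgroup of $M$; genericity of a smooth irreducible representation means that the space $\Hom_{U_M}(\pi, \psi)$ of $\psi$-Whittaker functionals is nonzero, and by uniqueness of Whittaker models it is then one-dimensional. The key point is that the functor $(\sigma, V_{\sigma,E}) \mapsto \Hom_{U_M}(\sigma, \psi_E)$ makes sense over $E$ once one fixes an $E$-valued additive character $\psi_E$ of $F$, since the space $V_{\sigma,E}$ is a union of finite-dimensional $E$-subspaces on which appropriate compact-open subgroups act trivially, and the condition of being a $\psi_E$-equivariant functional is given by $E$-linear equations.

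First I would fix an additive character $\psi: F \to E^*$ with values in the roots of unity $\mu_{p^\infty}(E) \subset E^*$ — here the `large enough' hypothesis on $E$ is used to ensure $E$ contains enough roots of unity — and let $\psi_E$ be the associated generic character of $U_M$, valued in $E^*$. Then I would form the $E$-vector space $W_E(\sigma) := \Hom_{U_M}(\sigma, \psi_E) = \Hom_{U_M}(V_{\sigma,E}, E(\psi_E))$, where $E(\psi_E)$ is the one-dimensional $U_M$-module on which $U_M$ acts by $\psi_E$. The main computation is that base change commutes with this Hom: because $V_{\sigma,E}$ is smooth and $U_M$ is locally pro-$p$, the functional is determined by its restriction to a finite-dimensional $E$-subspace cut out by congruence conditions, so $W_E(\sigma) \otimes_{E,\iota} \C \cong \Hom_{U_M}({}^\iota\sigma, {}^\iota\psi_E)$ for every embedding $\iota$. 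Since $\iota$ is injective and fixes $\Q$, the character ${}^\iota\psi_E$ is again a non-degenerate character of $U_M$ (non-degeneracy is the non-vanishing of $\psi$ on each simple root subgroup, a condition preserved by $\iota$), and by conjugacy of non-degenerate characters under $M$ the representation ${}^\iota\sigma$ is ${}^\iota\psi_E$-generic if and only if it is generic in the usual sense.

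Putting these together: ${}^\iota\sigma$ is generic $\iff$ $\Hom_{U_M}({}^\iota\sigma, {}^\iota\psi_E) \neq 0$ $\iff$ $W_E(\sigma) \otimes_{E,\iota}\C \neq 0$ $\iff$ $W_E(\sigma) \neq 0$, and the last condition is manifestly independent of $\iota$. Hence if ${}^\iota\sigma$ is generic for one $\iota$ then $W_E(\sigma) \neq 0$, so ${}^{\iota'}\sigma$ is generic for every $\iota'$.

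The step I expect to be the main obstacle — or at least the one requiring the most care — is the base-change isomorphism $\Hom_{U_M}(V_{\sigma,E}, E(\psi_E)) \otimes_{E,\iota}\C \cong \Hom_{U_M}(V_{\sigma,E}\otimes_{E,\iota}\C, \C({}^\iota\psi_E))$. Since $U_M$ is not compact this is a $\Hom$ out of an infinite-dimensional space and does not commute with arbitrary base change in general; the point is to use smoothness of $\sigma$ together with the fact (a form of the Jacquet–Langlands theory of Whittaker functionals, or the Bernstein–Zelevinsky derivative formalism in the $\GL$ case, and Rodier/Shalika–type results in general) that a Whittaker functional is controlled by finitely many `Fourier coefficients along the mirabolic filtration', reducing the problem to a finite-dimensional linear-algebra statement over $E$ where base change is trivially flat. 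Alternatively, one can invoke the $E$-rational structure on the Whittaker/Kirillov model directly. I would also remark that the argument is completely parallel to — and can be cited alongside — the treatment of arithmeticity of Whittaker models in \cite{raghuram-imrn} and \cite{clozel}.
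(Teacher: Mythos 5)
There is a genuine gap at the very first step of your construction: you cannot fix a nontrivial additive character $\psi : F \to E^*$. In this paper $E$ is a \emph{finite} Galois extension of $\Q$, hence contains only finitely many roots of unity, whereas any nontrivial smooth additive character of $F$ has image equal to the full group $\mu_{p^\infty}$ of $p$-power roots of unity. So the $E$-rational Whittaker space $W_E(\sigma) = \Hom_{U_M}(V_{\sigma,E}, E(\psi_E))$ on which your entire argument rests does not exist; ``large enough'' cannot rescue this without abandoning finiteness of $E$, and if you enlarge to $\Q(\mu_{p^\infty})$ or $\overline\Q$ you then face the problem that an embedding $\iota : E \to \C$ has many extensions to the larger field, each moving $\psi$ differently, so the clean statement ``$W_E(\sigma) \otimes_{E,\iota}\C \cong \Hom_{U_M}({}^\iota\sigma,\psi)$ for the \emph{fixed} $\psi$'' is lost. (Your secondary worry, that $\Hom$ out of an infinite-dimensional smooth module might not commute with base change, is actually the harmless part: one passes to the twisted coinvariants $V_{U_M,\psi}$, which is a cokernel and hence commutes with $\otimes_{E,\iota}\C$, and is at most one-dimensional by multiplicity one; no Bernstein--Zelevinsky or Rodier input is needed.)

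The paper sidesteps the rationality of $\psi$ entirely by not fixing the character: given a $\psi$-Whittaker functional $\ell$ on ${}^\iota\sigma$ and $\tau \in \Aut(\C)$ with $\iota' = \tau\circ\iota$, the transported functional $\tau\circ\ell$ (i.e., $v \otimes_{\iota'} 1 \mapsto \tau(\ell(v \otimes_\iota 1))$, extended $\C$-linearly) is a Whittaker functional on ${}^{\iota'}\sigma$ with respect to the character $\tau\circ\psi$, which is again non-degenerate since $\tau$ is injective. Since ``generic'' here means admitting a Whittaker model with respect to \emph{some} non-degenerate character, this finishes the proof in two lines. If you want to salvage your descent strategy, replace $W_E(\sigma)$ by the twisted Jacquet module $(V_{\sigma,E})_{U_M,\psi}$ taken after extending scalars only as far as needed, or better, simply adopt the transport-of-structure argument and let $\psi$ vary with $\tau$.
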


\begin{proof}
Suppose $\ell : {}^\iota\sigma \to \C$ is a Whittaker functional with respect to a character $\psi : U \to \C^*$ (that is nontrivial on all the root spaces corresponding to simple roots). Given another embedding $\iota' : E \to \C$, there exists $\tau \in \Aut(\C)$ such that $\iota' = \tau \circ \iota.$ Then $\tau \circ \ell$ is a Whittaker functional for ${}^{\iota'}\sigma$ with respect to the character $\tau \circ \psi$ of $U$. 
\end{proof}

After the above hypotheses and lemma, it makes sense to say that a smooth absolutely-irreducible admissible representation $(\sigma, V_{\sigma, E})$ of a reductive $p$-adic group $M$ is half-integrally unitary, essentially tempered, to the right of the unitary axis, or generic, if for some, and hence any, embedding $\iota : E \to \C$ 
the representation ${}^\iota\sigma$ is half-integrally unitary, essentially tempered, to the right of the unitary axis, or generic, respectively. 
The first main theorem of this article is the following result.

\subsubsection{An arithmetic variant of Thm.\,\ref{thm:waldy-rational}}

\begin{thm}
\label{thm:waldy-arithmetic}
Let $P = MN$ be a maximal parabolic subgroup of a connected reductive $p$-adic group $G.$  
Let $(\sigma, V_{\sigma, E})$ be a smooth absolutely-irreducible admissible representation of $M$ over an $E$-vector space $V_{\sigma, E}.$ 
Assume that $E$ is large enough to contain the values of the exponents of $A$ that appear in the Jacquet module of 
$\aInd_P^G(\sigma)$ with respect to the associate parabolic subgroup $Q$. Assuming Hyp.\,\ref{hyp:half-int-unitary}, 
Hyp.\,\ref{hyp:arith-temp}, and Hyp.\,\ref{hyp:arith-on-the-right}, we 
suppose that $\sigma$ is  half-integrally unitary, essentially tempered, to the right of the unitary axis, and generic. 
Suppose that $P$ satisfies the integrality condition: $\rho_P|_{A_P} \in X^*(A_P)$, then so does $Q$ and the modular character $\delta_Q$ takes values in 
$\Q^*.$ There exists an $E$-linear $G$-equivariant map
$$
T_{st, E} : \aInd_P^G(\sigma) \ \longrightarrow \ \aInd_Q^G(\sigma \otimes \delta_Q)
$$ 
such that for any embedding $\iota : E \to \C$ we have:
$$
T_{{\rm st}, E} \otimes_{E,\iota} 1_\C  \ = \ T_{{\rm st}, \iota}, 
$$
where $T_{{\rm st}, \iota} = T_{\rm st}(s, {}^\iota\sigma)|_{s = k} : \aInd_P^G({}^\iota\sigma) \to \aInd_Q^G({}^\iota\sigma \otimes \delta_Q)$ is the standard intertwining operator 
at the point of evaluation. 
\end{thm}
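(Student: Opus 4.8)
The plan is to descend Waldspurger's rationality result (Thm.\,\ref{thm:waldy-rational}) to the coefficient field $E$ by re-running each of its three steps over $\cB_E(M)$ instead of $\cB = \cB_\C(M)$, and then specialising at the point of evaluation. First I would set up the arithmetic analogue of the family of inducing representations: since $(\sigma, V_{\sigma,E})$ is defined over $E$, form $V_{\sigma,E} \otimes_E \cB_E(M)$ with the twisted $M$-action $\pi_{\cB_E}(m)(v\otimes b) = \sigma(m)v \otimes b_m b$, and the normalised induced $\cB_E(M)$-module $I_P^G((\sigma)_{\cB_E})$. The key observation is that every construction in Waldspurger's proof is purely algebraic over the base ring: Frobenius reciprocity for Jacquet modules and parabolic induction, the Bernstein--Zelevinskii filtration $\{\cF_{w,P'}\}$ of the Jacquet module with successive quotients $q_w$, the disjointness $\Exp_w \cap \Exp_1 = \emptyset$ for $w \neq 1$, and the resultant construction of $R \in \cB[A_P]$ and $b \in \cB$ that kills all sub-quotients but one. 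None of these used that the base ring was $\C$; they go through verbatim over $\cB_E(M) = E[t_1,t_1^{-1},\dots,t_l,t_l^{-1}]$ once $E$ is large enough to contain the relevant exponents (which is exactly the hypothesis on $E$ in the theorem). This produces $j_{\cB_E} \in \Hom_{M,\cB_E}(I_P^G((\sigma)_{\cB_E})_{P'}, (\sigma)_{\cB_E})$, hence a $G$-equivariant $\cB_E$-module map $J_{\cB_E} : I_P^G((\sigma)_{\cB_E}) \to I_{P'}^G((\sigma)_{\cB_E})$, together with an element $b_E \in \cB_E(M)$, satisfying the arithmetic analogue of \eqref{eqn:rational-operator-2}.

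The second block is compatibility with base-change. Because $\cB_E(M) \otimes_{E,\iota} \C = \cB_\C(M)$ and because Jacquet functors, parabolic induction, Frobenius reciprocity, and the resultant computation all commute with the flat base-change $-\otimes_{E,\iota}\C$, I would check that $J_{\cB_E} \otimes_{E,\iota} 1_\C$ is (up to the element $b_E$, which maps to a nonzero $b_\iota \in \cB_\C(M)$ since $\iota$ is injective on $E$) Waldspurger's operator $J_{\cB}$ for the representation ${}^\iota\sigma$. Here I would invoke the uniqueness built into Waldspurger's setup: the rational family $\{J(\pi\otimes\chi)\}$ on the cone $\cU$ is determined by the integral \eqref{eqn:intertwining-operator-waldy}, so the $J_{\cB}$ constructed by base-change computes the same intertwining operators as the analytically-defined ones, and by Prop.\,\ref{prop:abs-conv} (absolute convergence at $s=k$ is guaranteed once ${}^\iota\sigma$ is on the right of the unitary axis) the specialisation is honest, not merely a meromorphic continuation. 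A small bookkeeping point: the $J$ of Waldspurger (integral over $N'\cap N\backslash N'$) and the $T_{\rm st}$ of Shahidi (integral over $N_Q$, with the extra $w_0$) differ by a change of variables and a choice of $\Q$-valued Haar measure, so the two operators agree up to a $\Q^\times$-scalar; since $\Q \subset E$ this does not affect the $E$-structure. The integrality hypothesis $\rho_P|_{A_P} \in X^*(A_P)$ enters precisely to guarantee that $\delta_Q$ takes values in $\Q^\times$, so that $\sigma \otimes \delta_Q$ is again defined over $E$ and the target $\aInd_Q^G(\sigma\otimes\delta_Q)$ makes sense as an $E$-module.

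The third step is the specialisation at $s=k$. The point of evaluation corresponds to a specific $E$-rational point $\chi_k \in X_E(M)$ (recall $2k\in\Z$, and the character $\ul\eta^{\ul\cdot}$ values at $\varpi_F$ lie in $\Q$, hence in $E$); I would specialise $J_{\cB_E}$ along $\mathrm{sp}_{\chi_k}$ to obtain an $E$-linear $G$-equivariant map $\aInd_P^G(\sigma) \to \aInd_Q^G(\sigma\otimes\delta_Q)$, provided $b_E(\chi_k) \neq 0$. This last non-vanishing is the only genuinely analytic input: it holds because for \emph{every} $\iota$ the operator $T_{\rm st}(s,{}^\iota\sigma)|_{s=k}$ is the honest convergent integral (no pole), so $b_\iota(\chi_k) \neq 0$; since $b_\iota(\chi_k) = \iota(b_E(\chi_k))$ and $\iota$ is injective, $b_E(\chi_k)\neq 0$. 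Defining $T_{{\rm st},E} := b_E(\chi_k)^{-1}\,\mathrm{sp}_{\chi_k}\circ J_{\cB_E}$ (suitably normalising out the $\Q^\times$-measure constant) gives the desired map, and by construction $T_{{\rm st},E}\otimes_{E,\iota}1_\C = T_{{\rm st},\iota}$.

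The main obstacle I anticipate is not conceptual but one of careful verification: making sure that Waldspurger's resultant machinery — in particular the construction of $R$ and $b$ via resultants of the characteristic polynomials of $A_P$ acting on the graded pieces, and the disjointness-of-exponents argument — really is defined over $E$ rather than merely over $\C$. This requires that $E$ contain all the exponents appearing in $\Exp_w$ for every $w$ in the Bernstein--Zelevinskii filtration, not just those attached to the top piece; the theorem's hypothesis (``$E$ large enough to contain the values of the exponents of $A$ in the Jacquet module of $\aInd_P^G(\sigma)$ with respect to $Q$'') is tailored to this, and one should check it suffices for all intermediate pieces, perhaps after enlarging $E$ by a further finite extension. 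A secondary, more delicate point is tracking Galois-equivariance cleanly through the normalised-vs-unnormalised induction shift (the $\delta_Q$ twist) and the half-integral powers of $q$ — this is where Hyp.\,\ref{hyp:half-int-unitary}, Hyp.\,\ref{hyp:arith-temp}, and Hyp.\,\ref{hyp:arith-on-the-right} are used, to ensure that the exponents of ${}^\iota\sigma$ are independent of $\iota$ so that the point of evaluation and the convergence domain are stable under changing $\iota$.
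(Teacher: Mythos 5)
Your proposal is correct and follows essentially the same route as the paper: re-run Waldspurger's three algebraic steps (Frobenius reciprocity, Bernstein--Zelevinskii filtration, resultant construction) over $\cB_E(M)$ to get $T_{\cB,E}$ and $b_0 \in \cB_E$ with $T_{\cB,E}\otimes_{E,\iota}\C = T_\cB$, then specialise at the $E$-rational point $\chi_k = -\rho_P$ (this is where the integrality of $\rho_P|_{A_P}$ enters) and divide by $b_0(\chi_k) \in E^*$. The only differences are cosmetic — the paper absorbs the $J$-versus-$T_{\rm st}$ discrepancy by running Waldspurger's argument directly with $Q$ and the integral \eqref{eqn:intertwining-operator-shahidi}, rather than comparing the two integrals by a $\Q^\times$-scalar as you do.
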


\begin{proof}
Fix an $\iota : E \to \C$. For $\chi \in X(M)$, we have the standard intertwining operator 
$$
T_{\rm st}({}^\iota\sigma, \chi) : I_P^G({}^\iota\sigma \otimes \chi) \to I_Q^G({}^{w_0}({}^\iota\sigma \otimes \chi))
$$
given by an integral where it converges. We will ultimately specialize to the point $\chi_k$ corresponding to the point of evaluation 
$k = -\langle \rho_P, \alpha_P\rangle$; note that $\chi_k = -\rho_P$; at this point our hypothesis guarantees convergence. 
Consider Thm.\,\ref{thm:waldy-rational} 
with the small variation that we take the associate parabolic $Q$ and not $P'$ which required $M_{P'} = M_P;$ for $Q$ we have $M_Q$ is the $w_0$-conjugate of $M_P$. This causes no problem as long as we use the correct integral, i.e., 
we use \eqref{eqn:intertwining-operator-shahidi} instead of \eqref{eqn:intertwining-operator-waldy}. 
From Thm.\,\ref{thm:waldy-rational} we get a $(G,\cB)$-module map 
$$
T_\cB: I_P^G({}^\iota\sigma \otimes_\C \cB) \ \longrightarrow \  I_{Q}^G({}^{w_0\iota}\sigma \otimes_\C \cB) 
$$
that satisfies \eqref{eqn:rational-operator-2} with a homothety element $b \in \cB$. 

\medskip

The main steps in the proof of Thm.\,\ref{thm:waldy-rational} ((i) reduction via Frobenius reciprocity, (ii) Jacquet module calculation, and (iii) construction of 
an $M$-equivariant map using an element $R$ in the group ring of $A$ via the theory of resultants) are all purely algebraic in nature. The same proof, but now working with modules over $E$, gives us an $E$-linear map of $(G, \cB_E)$-modules: 
$$
T_{\cB, E} : I_P^G(\sigma \otimes_E \cB_E)  \ \longrightarrow \   I_{Q}^G({}^{w_0}\sigma \otimes_E \cB_E)
$$ 
with a homothety $b_0 \in \cB_E$ such that for any $\iota : E \to \C$ we have: $T_{\cB, E} \otimes_{E,\iota} \C = T_\cB,$ and $b_0 \otimes_{E,\iota} 1 = b.$
Specialise at the point of evaluation in \eqref{eqn:rational-operator-2}, i.e., take 
$\chi = \chi_k = -\rho_P \in X_E(A)$; hence, $b(\chi_k) = b_0(-\rho_P) \in E^*$; note that we have used $\rho_P|_{A_P}$ is an integral weight. We have: 
$$
\iota(b_0(-\rho_P)) T_{\rm st}({}^\iota\sigma, \chi_k) = {\rm sp}_{\chi_k} \circ (T_{\cB,E} \otimes_{E,\iota} 1_\C).
$$ 
For $\chi_0 \in X_E(A),$ if ${\rm sp}_{\chi_0, E} : \cB_E \to E$ denotes the specialization map at an arithmetic level, then clearly, 
${\rm sp}_{\chi_0, E} \otimes_{E, \iota} 1_\C = {\rm sp}_{\chi_0}$. Hence, 
$
T_{\rm st}({}^\iota\sigma, \chi_k) =  (b_0(\rho_P) {\rm sp}_{\chi_k, E} \circ T_{\cB,E}) \otimes_{E,\iota} 1_\C.
$ 
\end{proof}

\bigskip
\section{Arithmeticity of local critical $L$-values}
\label{sec:arithmetic-L-values}

The purpose of this section is to formulate an arithmeticity hypothesis on local critical $L$-values for automorphic $L$-functions. It is a generalization of \cite[Prop.\,3.17]{raghuram-imrn} which was in the context of Rankin--Selberg $L$-functions and was a crucial ingredient in the proof of the main theorem of that article. Using this 
hypothesis we may strengthen Thm.\,\ref{thm:waldy-arithmetic} to give an arithmeticity result for the normalised standard intertwining operator.

\subsection{Criticality condition on the point of evaluation}
\label{sec:criticality-on-k}

In the context of Rankin--Selberg $L$-functions one takes $M_P = \GL_n \times \GL_{n'}$ as a Levi subgroup of 
an ambient $G = \GL_N,$ where $N = n+ n'$. The integrality condition on $P$ forces $nn'$ to be even. 
For an inducing data $\pi \times \pi'$ of $M_P$, the critical set for the $L$-function $L(s, \pi \times \pi'^{\sf v})$ consists 
of integers if $n \equiv n' \pmod{2}$ and consists of half-integers, i.e., elements of $\tfrac12 + \Z$ if $n \not\equiv n' \pmod{2}$; see \cite[Def.\,7.3]{harder-raghuram-book}. 
The purpose of this subsection is to formalise such parity constraints in the context of Langlands-Shahidi machinery. 

\medskip

Suppose $A_i = \eta_i(A) = F^*$ and $A$ is the internal product $A_1 \times \dots \times A_l$; correspondingly, suppose $M = M_1\cdots M_l$ an almost direct product 
of reductive subgroups, with $A_i$ in the centre of $M_i$. 
Let $\rho_{M_i}$ be half the sum of positive roots for $M_i$. If $\rho_{M_i}$ is integral, then put $\varepsilon_{M_i} = 0.$ 
If $\rho_{M_i}$ is not integral, then necessarily $2 \rho_{M_i}$ is integral, and put $\varepsilon_{M_i} = 1.$ Fix an unramified character 
$\chi_P^{\varepsilon_P/2} \in \Hom(M/M^1, \C^*) = \Hom(A/A^1, \C^*),$ defined by 
$$
\chi_P^{\varepsilon_P/2}(a_1,\dots,a_l) \ := \ |a_1|^{\varepsilon_{M_1}/2} \dots |a_l|^{\varepsilon_{M_l}/2}, \quad  (a_1,\dots,a_l) \in A = F^* \times \cdots \times F^*.
$$ 
Let $\vartheta_P \in \LA_P^\circ$ be the Satake parameter of $\chi_P^{\varepsilon_P/2}.$ Using \eqref{eqn:diagram-central}, there exists $h_j \in \tfrac12 \Z$ such that 
$\tilde r_j(\vartheta_P) = q^{-h_j}$ or that $\tilde r_j(\chi_P^{\varepsilon_P/2}) = |\ |^{h_j}.$ 
Let $\pi$ be an irreducible admissible half-integrally unitary, essentially-tempered, generic representation of $M_P$. 
Consider $\pi \otimes \chi_P^{\varepsilon_P/2}$; we have:  
$$
L(s, \pi, \tilde r_j) \ = \ 
L(s - h_j, \pi \otimes \chi_P^{\varepsilon_P/2}, \tilde r_j).
$$
The idea is that given $\pi,$ we {\it algebrise} it by considering the twist $\pi \otimes \chi_P^{\varepsilon_P/2}$. For $\GL_n$ this is equivalent to replacing 
$\pi$ by $\pi \otimes |\ |^{\varepsilon_n/2}$, where $\varepsilon_n \in \{0,1\}$ and $\varepsilon_n \equiv n-1 \pmod{2}.$
Any point of evaluation of a global $L$-function attached to an algebraic data (think of a motivic $L$-function) 
should be an integer for the $L$-value to be critical in the sense of Deligne \cite{deligne}. 
This motivates the following definition which is independent of $\pi$ and depends only on $(G,P)$.

\begin{defn}
Let $G$ be a connected reductive $p$-adic group and $P$ a maximal parabolic subgroup. We say that $P$ is critical for $G$ if the point 
of evaluation $k = - \langle \rho_P, \alpha_P \rangle$ satisfies the condition:
$$
jk \in h_j + \Z, \quad \forall \ 1 \leq j \leq m.
$$
\end{defn}

\medskip

\subsection{Hypothesis on local critical $L$-values}

We can now formulate the arithmeticity hypothesis for local critical $L$-values. 

\begin{hyp}
\label{hyp:local-L-value}
Let $G$ be a connected reductive $p$-adic group and $P$ a maximal parabolic subgroup. Assume that $P$ satisfies the following two conditions:
\begin{enumerate}
\item[(i)]  the integrality condition: $\rho_P|_{A_P} \in X^*(A_P)$; and 
\smallskip
\item[(ii)] the criticality condition: $P$ is critical for $G$.  
\end{enumerate} 
Let $\sigma$ be a smooth, absolutely-irreducible, half-integrally unitary, essentially-tempered, admissible, generic representation of $M_P$ over 
a field of coefficients $E$.  
Let $k = -\langle \rho_P, \alpha_P\rangle$ be the point of evaluation, and let $s_j  \in \{jk, \, jk+1\}$ for any $1 \leq j \leq m$. 
Then for 
any embedding $\iota: E \to \C$ we have: 
\begin{enumerate}
\item $L(s_j, {}^\iota\sigma, \, \tilde r_j) \in \iota(E),$ and furthermore  
\smallskip
\item for any $\tau \in \Gal(\bar\Q/\Q)$ we have $\tau(L(s_j, {}^\iota\sigma, \, \tilde r_j)) \ = \ L(s_j, {}^{\tau \circ\iota}\sigma, \, \tilde r_j).$
\end{enumerate} 
\end{hyp}

As already mentioned, this hypothesis can be verified in some various concrete examples of interest. We briefly mention two examples below; these contexts 
are amplified in Sect.\,\ref{sec:examples}; the reader will readily appreciate that such examples may be generalised. 

\begin{exam}[Local $L$-functions for $\GL_n(F)$]
\label{exam-GLn-arith-L-value}
{\rm 
If $\pi$ is an irreducible admissible representation of $\GL_n(F)$ then it follows from Clozel \cite[Lem.\,4.6]{clozel} that for any $k_0 \in \Z$ and 
any $\tau \in \Aut(\C)$ one has:  
$$
\tau\left(L(k_0 + \tfrac{1-n}{2}, \pi)\right) \ = \ L(k_0 + \tfrac{1-n}{2}, {}^\tau\pi). 
$$
The reader can check that Hyp.\,\ref{hyp:local-L-value} follows from this Galois equivariance after appealing to the details in 
Sect.\,\ref{sec:example-rankin-selberg}. 
Such a Galois equivariance can be reformulated as  
$$
\tau\left(L(k_0, \pi)\right) \ = \ L(k_0, {}^\tau\pi \otimes \varepsilon_\tau^{n-1}), 
$$
which is useful in other situations; see the next example below. 
}\end{exam}

\begin{exam}[Local $L$-functions for $\rO_{2n}(F)$]
\label{exam-O2n-arith-L-value}
{\rm 
Suppose $\pi$ is an irreducible tempered representation of $\rO_{2n}(F)$ as in the proof of Hyp.\,\ref{hyp:arith-temp} for orthogonal groups; in particular,
$\pi \hookrightarrow \theta_1 \times \cdots \times \theta_q \rtimes \pi_0$, with notations as therein. Then,     
the $L$-parameters are related as: 
$\phi_\pi \ = \ \phi_{\theta_1} + \cdots + \phi_{\theta_1} + \phi_{\pi_0} + \phi_{\theta_1}^{\sf v} + \cdots + \phi_{\theta_q}^{\sf v}$ 
(see Atobe--Gan \cite[Desideratum 3.9]{atobe-gan}). In particular, 
for $L$-functions, evaluating at $s = k \in \Z$ (see Sect.\,\ref{sec:example-orthogonal} for the fact that the point of evaluation is an integer), we get: 
$
L(k, \pi) \ = \ L(k, \theta_1)\cdots L(k, \theta_q) \cdot L(k, \pi_0) \cdot L(k, \theta_1^{\sf v})\cdots L(k, \theta_q^{\sf v}).
$
Apply $\tau \in \Aut(\C)$ to both sides 
while using Example \ref{exam-GLn-arith-L-value} to get $\tau(L(k, \pi))$ is 
$$
L(k, {}^\tau\theta_1 \otimes \varepsilon_\tau^{a_1-1})\cdots L(k, {}^\tau\theta_q\otimes \varepsilon_\tau^{a_q-1}) \cdot \tau(L(k, \pi_0)) \cdot
L(k, {}^\tau\theta_1^{\sf v} \otimes \varepsilon_\tau^{a_1-1})\cdots L(k, {}^\tau\theta_q^{\sf v}\otimes \varepsilon_\tau^{a_q-1}). 
$$
Now use \eqref{eqn:tau-pi-O-2n} for ${}^\tau\pi$; then take its $L$-function evaluated at $s = k$ to get: 
$$
\tau(L(k, \pi)) \ = \ L(k, {}^\tau\pi), 
$$ 
assuming by induction that $\tau(L(k, \pi_0)) \ = \ L(k, {}^\tau\pi_0)$ holds for a discrete series representation $\pi_0$ of $\rO_{2n_0}(F)$. The proof for 
a discrete series representation follows the same reduction strategy as in the proof of Hyp.\,\ref{hyp:arith-temp} for even orthogonal groups.
}\end{exam}

I expect that a proof of Hyp.\,\ref{hyp:local-L-value} in the general case should come from an arithmetic-analysis of Shahidi's theory of local factors.

\medskip
\subsection{An arithmetic variant of Thm.\,\ref{thm:waldy-rational} for normalised intertwining operator}

We can now strengthen Thm.\,\ref{thm:waldy-arithmetic} for the normalised intertwining operator. 

\begin{thm}
\label{thm:waldy-arithmetic-normalised}
Let the notations and hypotheses be as in Thm.\,\ref{thm:waldy-arithmetic}. Assume furthermore that Hyp.\,\ref{hyp:local-L-value} holds.  
Let $T_{\rm norm} = T_{\rm norm}(s, {}^\iota\sigma)|_{s = k}$ be the normalised standard intertwining operator (see \eqref{eqn:T-norm}) 
at the point 
of evaluation $s = k$. Then there exists an $E$-linear $G$-equivariant map
$$
T_{{\rm norm}, E} : \aInd_P^G(\sigma) \ \longrightarrow \ \aInd_Q^G(\sigma \otimes \delta_Q)
$$ 
such that for any embedding $\iota : E \to \C$ we have:
$$
T_{{\rm norm}, E} \otimes_{E,\iota} 1_\C  \ = \ T_{\rm norm}. 
$$

\end{thm}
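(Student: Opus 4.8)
The plan is to combine the arithmeticity of the unnormalised operator from Thm.\,\ref{thm:waldy-arithmetic} with the arithmeticity of the local $L$-values from Hyp.\,\ref{hyp:local-L-value}, the point being that the normalising factor in \eqref{eqn:T-norm} is, at the point of evaluation $s=k$, a nonzero scalar lying in $\iota(E)$ and transforming correctly under $\Gal(\bar\Q/\Q)$. First I would recall the precise relation
$$
T_{\rm norm}(s, {}^\iota\sigma)\big|_{s=k} \ = \ \left(\prod_{j=1}^m \frac{L(jk, {}^\iota\sigma, \tilde r_j)}{L(jk+1, {}^\iota\sigma, \tilde r_j)}\right)^{-1} T_{\rm st}(s, {}^\iota\sigma)\big|_{s=k},
$$
which makes sense because, by the Corollary following Def.\,\ref{def:right-of-u-axis}, all four $L$-values $L(jk, {}^\iota\sigma, \tilde r_j)$ and $L(jk+1, {}^\iota\sigma, \tilde r_j)$ are finite (and the numerator $L$-values are nonzero, being values of an inverse polynomial in $q^{-s}$ which has no zeros), so the scalar
$$
c_\iota \ := \ \prod_{j=1}^m \frac{L(jk+1, {}^\iota\sigma, \tilde r_j)}{L(jk, {}^\iota\sigma, \tilde r_j)}
$$
is a well-defined element of $\C^\times$.

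Next I would invoke Hyp.\,\ref{hyp:local-L-value}: since $P$ satisfies the integrality and criticality conditions (the integrality condition is among the hypotheses inherited from Thm.\,\ref{thm:waldy-arithmetic}, and the criticality condition is imposed here), and $\sigma$ is half-integrally unitary, essentially tempered, to the right of the unitary axis, and generic, part (1) of the hypothesis gives $L(s_j, {}^\iota\sigma, \tilde r_j) \in \iota(E)$ for each $s_j \in \{jk, jk+1\}$, hence $c_\iota \in \iota(E)^\times$. Writing $c_\iota = \iota(c_0)$ for a (unique) $c_0 \in E^\times$, I would then define
$$
T_{{\rm norm}, E} \ := \ c_0^{-1} \cdot T_{{\rm st}, E} \ : \ \aInd_P^G(\sigma) \ \longrightarrow \ \aInd_Q^G(\sigma \otimes \delta_Q),
$$
where $T_{{\rm st}, E}$ is the $E$-linear $G$-equivariant map furnished by Thm.\,\ref{thm:waldy-arithmetic}. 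This is manifestly $E$-linear and $G$-equivariant. To check the base-change identity, for any $\iota : E \to \C$ we compute
$$
T_{{\rm norm}, E} \otimes_{E,\iota} 1_\C \ = \ \iota(c_0)^{-1}\, (T_{{\rm st}, E} \otimes_{E,\iota} 1_\C) \ = \ c_\iota^{-1}\, T_{{\rm st}, \iota} \ = \ T_{\rm norm}(s, {}^\iota\sigma)\big|_{s=k},
$$
using $T_{{\rm st}, E} \otimes_{E,\iota} 1_\C = T_{{\rm st}, \iota}$ from Thm.\,\ref{thm:waldy-arithmetic} and the definition \eqref{eqn:T-norm} of the normalisation.

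The only genuine subtlety — and the step I would flag as requiring care rather than the step I expect to be a serious obstacle — is the well-definedness of $c_0$: one must know that $c_\iota$, a priori just a complex number attached to each embedding, actually descends to a single element of $E^\times$ independent of the choice of embedding, and that this element is the ``same'' for all $\iota$ in the sense needed for the tensor identity. This is exactly what part (2) of Hyp.\,\ref{hyp:local-L-value} provides: for $\tau \in \Gal(\bar\Q/\Q)$ one has $\tau(L(s_j, {}^\iota\sigma, \tilde r_j)) = L(s_j, {}^{\tau\circ\iota}\sigma, \tilde r_j)$, so $\tau(c_\iota) = c_{\tau\circ\iota}$; since any two embeddings of $E$ into $\C$ differ by such a $\tau$, the elements $\iota^{-1}(c_\iota) \in E^\times$ agree across all $\iota$, giving the desired $c_0 \in E^\times$ with $\iota(c_0) = c_\iota$ for every $\iota$. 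Once this bookkeeping is in place, the theorem follows immediately, and there is no further analytic or geometric input needed beyond what Thm.\,\ref{thm:waldy-arithmetic} already supplies.
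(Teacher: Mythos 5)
Your proposal is correct in substance and is exactly the argument the paper intends (the paper states Thm.\,\ref{thm:waldy-arithmetic-normalised} without a written proof, as an immediate consequence of Thm.\,\ref{thm:waldy-arithmetic} combined with Hyp.\,\ref{hyp:local-L-value}): part (1) of the hypothesis makes the normalising factor at $s=k$ a nonzero element of $\iota(E)$, and part (2) is precisely what lets it descend to a single $c_0 \in E^\times$ with $\iota(c_0)=c_\iota$ for every embedding $\iota$, which is needed because the single operator $T_{{\rm norm},E}$ must base-change correctly for all $\iota$ simultaneously. The one slip is an inversion: with your definition $c_\iota = \prod_j L(jk+1,{}^\iota\sigma,\tilde r_j)/L(jk,{}^\iota\sigma,\tilde r_j)$ one has $T_{\rm norm}(s,{}^\iota\sigma)|_{s=k} = c_\iota\, T_{{\rm st},\iota}$, so you should set $T_{{\rm norm},E} = c_0\, T_{{\rm st},E}$ rather than $c_0^{-1}\, T_{{\rm st},E}$, and the final chain of equalities becomes $T_{{\rm norm},E}\otimes_{E,\iota}1_\C = \iota(c_0)\,T_{{\rm st},\iota} = c_\iota\, T_{{\rm st},\iota} = T_{\rm norm}$.
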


\bigskip
\section{Examples} 
\label{sec:examples}

\medskip
\subsection{Rankin--Selberg $L$-functions}
\label{sec:example-rankin-selberg}
(See case ($A_{N-1}$) in \cite[Appendix B]{shahidi-book}.)
$$
\xymatrix{
\stackrel{\alpha_1}{\bullet} \ar@{-}[r] & {\cdots} & \stackrel{\alpha_{n-1}}{\bullet} \ar@{-}[l] \ar@{-}[r] & \circ & 
\stackrel{\alpha_{n}}{\bullet} \ar@{-}[l] \ar@{-}[r]& {\cdots} & \stackrel{\alpha_{n+n'-2}}{\bullet} \ar@{-}[l]
}$$

\medskip
\begin{enumerate}
\item Ambient group: $G = \GL(N)/F$ with $N \geq 2$. 
\smallskip
\item Maximal parabolic subgroup: take $N = n+n'$ and let $P$ be the maximal parabolic subgroup with Levi $M_P = \GL(n) \times \GL(n')$; 
the deleted simple root $\alpha_P = e_n-e_{n+1}.$ 
\smallskip
\item The integrality condition $\rho_P \in X^*(A_P)$ holds if and only if $nn' \equiv 0 \! \pmod{2}.$ \\
The set of roots with roots spaces appearing in the Lie algebra $\mathfrak{n}_P$ of the unipotent radical of $P$ is 
$\{e_i - e_j : 1 \leq i \leq n, \ n+1 \leq j \leq n+n'\}.$
Hence $
\rho_P = \frac{n'}{2}(e_1+\dots + e_n) - \frac{n}{2}(e_{n+1}+\dots+e_{n+n'}).$
Whence, $\rho_P$ as a character of $A_P$ is given by: ${\rm diag}(t1_n, t'1_{n'}) \mapsto (t t')^{nn'/2},$ which is integral if and only if $nn'$ is even. 
It is curious that this very condition was imposed in \cite{harder-raghuram-book} due to motivic considerations (the tensor product motive therein needed to be even rank). 

\smallskip
\item At the level of dual groups, $\LM_P^\circ \cong \GL_n(\C) \times \GL_{n'}(\C)$ acts irreducibly on the Lie algebra 
${}^L\mathfrak{n}_P \cong M_{n \times n'}(\C)$ of $\LN_P$; $m=1$. 

\smallskip
\item Inducing data consists of $\pi$ and $\pi'$ which are essentially tempered irreducible generic 
representations of $\GL_n(F)$ and $\GL_{n'}(F)$ then $L(s, \pi \otimes \pi', \tilde r_1) = L(s, \pi \times \pi^{\sf v})$ is the local 
Rankin--Selberg $L$-function attached to $\GL_n(F) \times \GL_{n'}(F).$ 

\smallskip
\item The point of evaluation is $k = - \langle \rho_P, \alpha_P \rangle = -N/2$. 

\smallskip
\item $P$ is critical for $G$: $M = M_1M_2$ with $M_1 = \GL_n(F)$ and $M_2 = \GL_{n'}(F);$
$\varepsilon_{M_1} = \varepsilon_n$ and $\varepsilon_{M_2} = \varepsilon_{n'}$ (recall: 
$\varepsilon_n \in \{0,1\}$ by $\varepsilon_n \equiv n-1 \pmod{2}$);  
$h_1 = (\varepsilon_n - \varepsilon_{n'})/2$; and $k \in h_1 + \Z$. 
\end{enumerate}

\subsection{$L$-functions for orthogonal groups}
\label{sec:example-orthogonal}
See case ($D_{n,i}$) in \cite[Appendix A]{shahidi-book}; the corresponding global context is studied in \cite{bhagwat-raghuram}. 
$$
\xymatrix{
& & & & \stackrel{\alpha_{n}}{\bullet} \\
{\circ} \ar@{-}[r] & \stackrel{\alpha_1}{\bullet} \ar@{-}[r] &  {\cdots} &   \stackrel{\alpha_{n-2}}{\bullet} \ar@{-}[l] \ar@{-}[ru] \ar@{-}[rd] & \\ 
& & & & \stackrel{\alpha_{n-1}}{\bullet}
}$$

\medskip
\begin{enumerate}
\item Ambient group $G = \rO(n+1, n+1) = \{g \in \GL_{2n+2}(F) : {}^tg \cdot J_{2n+2} \cdot g = J_{2n+2}\}$, where 
$J_{2n+2}(i,j) = \delta(i, 2n+3-j)$; this is the split even orthogonal group of rank $n+1$; the maximal torus consists of all diagonal
matrices ${\rm diag}(t_0,t_1,\dots,t_n, t_n^{-1},\dots, t_1^{-1}, t_0^{-1}).$

\smallskip
\item Let $P$ be the maximal parabolic subgroup described by the above Dynkin diagram; deleted simple root $\alpha_P = e_0-e_1;$
Levi subgroup is 
$$
M_P =  \left\{ m_{t,h} = \begin{pmatrix}
t & & \\ & h & \\ & & t^{-1} \end{pmatrix} : t \in \GL_1(F), \  h \in \rO(n,n)\right\}; 
$$
clearly, $A_P = \{m_{t,1} \in M_P : t \in \GL_1\}$; the unipotent radical of $P$ is:
$$
N_P = \left\{ u_{y_1, y_2, \ldots, y_{2n}} = 
\left( \begin{array}{ccccccc} 
  1 & y_1 & y_2 & \ldots & y_{2n} & 0 \\
                          & 1  &  & &  &-y_{2n} \\ 
                          & & 1  & & & -y_{2n-1} \\  
                          & &   &   \ddots & & \vdots \ \\
                          & &  & &  \ddots  &  -y_{1} \\
                          & &   & & & 1
                          \end{array}\right) \ \ | \ \  
                          y_1,\dots,y_{2n} \in F \right\}. 
$$ 

\smallskip
\item The integrality condition on $\rho_P$ holds for all $n.$ 
The set of roots with root spaces in the Lie algebra of $N_P$ is 
$
\{e_0-e_1, \, e_0-e_2,\dots, \, e_0-e_{2n}\}. 
$
Hence 
$$
\rho_P = n e_0 - \tfrac12(e_1+e_3+\dots + e_{2n});
$$ from the maximal torus one has 
$e_{n+1} = -e_n, e_{n+2} = -e_{n-1}, \dots, e_{2n} = -e_1$ from which it follows that $\rho_P = n e_0$. Whence, 
$\rho_P|_{A_P}$ is the integral character $t = m_{t,1} \mapsto t^n.$ 

\smallskip
\item At the level of dual groups, $\LM_P^\circ = \left\{ m_{t,h} : t \in \C^*, \  h \in \rO(n,n)(\C)\right\}$ acts irreducibly on the Lie algebra 
${}^L\mathfrak{n}_P$ of $\LN_P^\circ$; $m=1$ and $r_1$ is the standard representation of $\rO(n,n)(\C)$ twisted by the $\C^*$ in the obvious way. 

\smallskip
\item Inducing data is of the form $\chi \otimes \pi$ for a character $\chi: F^* \to \C^*$, and a tempered, irreducible, 
generic representation 
$\pi$ of $\rO(n,n)(F).$  The local $L$-function $L(s, \chi \otimes \pi, \tilde r_1)$ is the local Rankin-Selberg $L$-function 
$L(s, \chi \otimes \tilde r_1(\pi))$ for $\GL_1 \times \GL_{2n}.$   

\smallskip
\item The point of evaluation is $k = - \langle \rho_P, \alpha_P \rangle = -n.$ 

\smallskip
\item $P$ is critical for $G$, since $M = M_1M_2$ with $M_1 = \GL_1(F)$ and $M_2 = \rO(n,n)(F);$ $\rho_{M_i}$ is integral; 
$h_1 = 0$; $k \in \Z$. 
\end{enumerate}

\subsection{Exterior square $L$-functions}
\label{sec:example-exterior-square}
(See case ($C_{n-1,ii}$) in \cite[Appendix A]{shahidi-book}.)
$$
\xymatrix{
\stackrel{\alpha_1}{\bullet} \ar@{-}[r] & \stackrel{\alpha_2}{\bullet} \ar@{-}[r] & {\cdots} &   \stackrel{\alpha_{n-1}}{\bullet} \ar@{-}[l]   \ar@2{-}[r] |-{\SelectTips{cm}{}\object@{<}} & {\circ}
}$$
\medskip
\begin{enumerate}
\item Ambient group 
$G = \Sp_{2n}(F) =  \left\{ g \in \GL_{2n}(F) : {}^tg \cdot \begin{pmatrix}  & J_n \\ -J_n&  \end{pmatrix} \cdot g = \begin{pmatrix}  & J_n \\ -J_n&  \end{pmatrix} \right\},$ 
where $J_n(i,j) = \delta(i, r-j+1)$ and ${}^tg$ is the transpose of $g$. 

\smallskip
\item Maximal parabolic subgroup as depicted by the above Dynkin diagram has Levi subgroup:
$M_P = \left\{ \begin{pmatrix} h & \\ & {}^{(t)}h^{-1} \end{pmatrix} : h \in \GL_n(F) \right\}$ where 
${}^{(t)}h = J_n \cdot {}^t h \cdot J_n$ is the `other-transpose' of $h$ defined by $({}^{(t)}h)_{i,j} = h_{n-j+1, n-i+1}.$ The deleted simple root $\alpha_P = 2 e_n.$ 
 
\smallskip 
\item The integrality condition $\rho_P \in X^*(A_P)$ holds if and only if $n \equiv 0,3 \pmod{4}.$ \\
The Lie algebra of the unipotent radical of $P$ is of the form 
$$
\mathfrak{n}_P \ = \ 
\left\{ \begin{pmatrix} 0_n & X \\ 0_n & 0_n \end{pmatrix} : X \in M_n(F), \ {}^{(t)}X = X \right\}.
$$
The set of roots with root spaces appearing in $\mathfrak{n}_P$ is
$
\{e_1-e_{n+1},\ \dots,\ e_1-e_{2n}, \ e_2-e_{n+1},\ \dots,\ e_2-e_{2n-1}, \ \dots, 
 e_n-e_{n+1}\}.
$
Keeping in mind that $e_j = -e_{2n-j+1}$ we get
$$
\rho_P = \frac{n+1}{2}(e_1+ \cdots + e_n).
$$ 
Whence, $\rho_P|_{A_P}$ is given by: ${\rm diag}(t1_n, t^{-1}1_n) \mapsto t^{n(n+1)/2},$ which is integral if and only if 
$n(n+1)/2$ is even, i.e., $n \equiv 0 \ {\rm or} \ 3 \! \pmod{4}.$
 
\smallskip
\item Dual groups: 
$\LG^\circ = \SO(2n+1,\C) = \left\{ g \in \SL_{2n+1}(F) : {}^tg \cdot J_{2n+1} \cdot g = J_{2n+1} \right\},$ \\
$$\LM_P^\circ = \left\{ 
m_g = \begin{pmatrix} g & & \\  & 1 & \\ & & {}^{(t)}g^{-1} \end{pmatrix} : g \in \GL_n(\C) \right\} \cong \GL_n(\C);
$$  
{\small
$$
{}^L\mathfrak{n}_P \ = \ \left\{ 
n_{y,X} = 
\begin{pmatrix}
0_n & y & X \\
0_{1 \times n} & 0 & -{}^ty J_n  \\
0_n & 0_{n \times 1} & 0_n
\end{pmatrix}: \ y \in M_{n \times 1}(\C), \ X \in M_{n \times n}(\C), \ {}^{(t)}X = -X \right\}. 
$$} 

\noindent
The adjoint action of $\LM_P^\circ$ on ${}^L\mathfrak{n}_P$ is the direct sum of two irreducible representations with representation spaces 
$V_1 = \{n_{y,0} \in {}^L\mathfrak{n}_P\}$ and $V_2 = \{n_{0,X} \in {}^L\mathfrak{n}_P\}$ of dimensions $n$ and $n(n-1)/2$, respectively; 
$r_1$ is the standard representation and $r_2$ is 
the exterior square representation; $m=2$. 
The centre $\LA_P^\circ$ of $\LM_P^\circ$ consists of elements $a_t = m_{t\cdot I_n}$ for $t \in \C^\times$; then $a_t$ acts on $V_1$ by the scalar $t$ and on $V_2$ by the scalar $t^2$. 
\smallskip
\item The inducing data is a half-integrally unitary, irreducible, essentially-tempered, generic representation $\pi$ of $\GL_n(F);$ for the $L$-functions we have:  
   \begin{enumerate}
   \item $L(s, \pi, \tilde r_1) = L(s, \pi),$ the standard $L$-function for $\GL(n)$, and 
   \item $L(s, \pi, \tilde r_2) = L(s, \pi, \wedge^2),$ the exterior square $L$-function for $\GL(n).$
   \end{enumerate}
   
\smallskip
\item 
The point of evaluation is $k = - \langle \rho_P, \alpha_P \rangle = - \frac{n+1}{2}.$  

\smallskip
\item 
$P$ is critical for $G$. Since $\varepsilon_M = \varepsilon_n,$ $h_1 = \varepsilon_n/2,$ and $h_2 = \varepsilon_n$; 
hence $jk \in h_j + \Z$ holds for $j = 1, 2.$
\end{enumerate}

\subsection{Explicit intertwining calculation for the case of $\GL(2)$}

Some essential features of main results are already visible for the example of $\GL(2)$ from first principles; 
although the reader is warned of the well-known dictum that $\GL(2)$ is  misleadingly simple and it is difficult to carry out a straightforward generalisation 
of such calculations. 

\medskip 
Let $E/\Q$ be a finite extension,  
and for $i = 1,2$, let $\chi_i : F^\times \to E^\times$ be a smooth character, and $\chi_i^\circ$ its restriction to $\cO_F^\times.$ 
Let $\iota : E \to \C$ be an embedding of fields, and 
${}^\iota\chi_i = \iota \circ \chi_i$ be the corresponding $\C$-valued character of $F^\times.$ Let $G = \GL_2(F)$, $K = \GL_2(\cO_F),$ and for 
$m \geq 0$ let $K(m)$ be the principal congruence subgroup of $K$ of level $m$; $K(0) = K.$ The standard intertwining operator $T_{\rm st}(s)$ at the 
point of evaluation $s = -1$ between the $K(m)$-invariants of algebraically induced representations has the shape: 
$$
T_{\rm st}(s)|_{s = -1} \ : \ \aInd_B^G({}^\iota\chi_1 \otimes {}^\iota\chi_2)^{K(m)}
 \ \longrightarrow \ 
\aInd_B^G({}^\iota\chi_2(1) \otimes {}^\iota\chi_1(-1))^{K(m)}.
$$
The standing assumptions that ${}^\iota\chi_i$ is half-integrally unitary, essentially tempered, and $\pi = {}^\iota\chi_1 \otimes {}^\iota\chi_2$ is on the right of the unitary axis with respect to $G$ implies that $T := T_{\rm st}(s)|_{s = -1}$ is finite. A function in $\aInd_B^G({}^\iota\chi_1 \otimes {}^\iota\chi_2)^{K(m)}$ is completely 
determined by its restriction to $K$. This gives us the following diagram: 
$$
\xymatrix{
\aInd_B^G({}^\iota\chi_1 \otimes {}^\iota\chi_2)^{K(m)} \ar[d]_{f \mapsto f|_K} \ar[rr]^{T}
& & 
\aInd_B^G({}^\iota\chi_2(1) \otimes {}^\iota\chi_1(-1))^{K(m)}\ar[d]^{f \mapsto f|_K} \\
\aInd_{K \cap B}^K({}^\iota\chi_1^\circ \otimes {}^\iota\chi_2^\circ)^{K(m)} \ar[rr]^{T^\circ}
& & 
\aInd_{K \cap B}^K({}^\iota\chi_2^\circ \otimes {}^\iota\chi_1^\circ)^{K(m)} 
}$$
Working with $K(m)$-invariants is not strictly necessary; it has the virtue of making the spaces finite-dimensional and $G$-action is replaced by action of the Hecke-algebra 
$\cC^\infty_c(G/\!/K(m)).$
Let $f^\circ \mapsto \tilde{f^\circ}$ denote the inverse of $f \mapsto f|_K.$ Let $f \in \aInd_{B}^G({}^\iota\chi_1 \otimes {}^\iota\chi_2)^{K(m)}$
and for brevity let $f^\circ = f|_K.$ Since $T(f)$ is determined by its restriction to $K$, we have:
$$
T^\circ(f^\circ)(k) \ = \ T(f)(k) \ = \ 
\int_F f(
\begin{pmatrix} & -1\\ 1 & \end{pmatrix} \begin{pmatrix} 1 & x \\  & 1\end{pmatrix}k) \, dx, \quad k \in K.
$$
Break up the integral over $x \in \cP^{-m}$ and $x \notin \cP^{-m}.$ Note that 
$$
\int_{\cP^{-m}} f(
\begin{pmatrix} & -1\\ 1 & \end{pmatrix} \begin{pmatrix} 1 & x \\  & 1\end{pmatrix}k) \, dx \ = \ 
\sum_{a \in \cP^{-m}/\cP^m} 
\int_{y \in \cP^m} f(
\begin{pmatrix} & -1\\ 1 & \end{pmatrix} \begin{pmatrix} 1 & a + y \\  & 1\end{pmatrix} k) \, dy.  
$$
We write 
$$
\begin{pmatrix} 1 & a + y \\  & 1\end{pmatrix} k \ = \ 
\begin{pmatrix} 1 & a \\  & 1\end{pmatrix} \begin{pmatrix} 1 & y \\  & 1\end{pmatrix} k \ = \ 
\begin{pmatrix} 1 & a \\  & 1\end{pmatrix} k \cdot k^{-1} \begin{pmatrix} 1 & y \\  & 1\end{pmatrix} k
$$
and use that $K(m)$ is a normal subgroup of $K$ and $\tilde{f^\circ}$ is right $K(m)$-invariant to get
\begin{equation}
\label{eqn:x-in-p-m}
\int_{\cP^{-m}} f(
\begin{pmatrix} & -1\\ 1 & \end{pmatrix} \begin{pmatrix} 1 & x \\  & 1\end{pmatrix}k) \, dx \ = \ 
{\rm vol}(\cP^m) 
\sum_{a \in \cP^{-m}/\cP^m} f(
\begin{pmatrix} & -1\\ 1 & \end{pmatrix} \begin{pmatrix} 1 & a \\  & 1\end{pmatrix} k),
\end{equation}
which is a finite-sum. For the integral over $x \notin \cP^{-m}$ use: 
$$
\begin{pmatrix} & -1\\ 1 & \end{pmatrix} \begin{pmatrix} 1 & x \\  & 1\end{pmatrix} \ = \ 
\begin{pmatrix}x^{-1} & \\  & x\end{pmatrix} 
\begin{pmatrix}1 & -x\\  & 1\end{pmatrix} 
\begin{pmatrix}1 & \\ x^{-1} & 1\end{pmatrix}; 
$$
break up $\int_{x \notin \cP^{-m}}$ as $\sum_{r = m}^{\infty} \int_{\varpi^{-r}\cO^\times}$ to get  
$$
\int_{x \notin \cP^{-m}} f(
\begin{pmatrix} & -1\\ 1 & \end{pmatrix} \begin{pmatrix} 1 & x \\  & 1\end{pmatrix}k) \, dx \ = \ 
\sum_{r = m}^\infty \int_{\varpi^{-r}\cO^\times}  
f(
\begin{pmatrix}x^{-1} & \\  & x\end{pmatrix} 
\begin{pmatrix}1 & -x\\  & 1\end{pmatrix} 
\begin{pmatrix}1 & \\ x^{-1} & 1\end{pmatrix}k) \, dx. 
$$
Since $x^{-1} \in \cP^m$, using the equivariance of $f$, the right hand side simplifies to: 
$$
\sum_{r = m}^\infty \int_{\varpi^{-r}\cO^\times}   {}^\iota \chi_1(x^{-1}){}^\iota\chi_2(x) f(k) \, dx. 
$$
Make the substitution $x = \varpi^{-r}u$ with $u \in \cO^\times$; then $dx = q^r du = q^r d^\times u$, and one gets: 
$$
f(k) 
\sum_{r = m}^\infty 
{}^\iota \chi_1(\varpi^{r}){}^\iota\chi_2(\varpi^{-r}) q^r
\int_{\cO^\times}   {}^\iota \chi_1(u^{-1}){}^\iota\chi_2(u)\, d^\times u. 
$$
The inner integral is nonzero if and only if ${}^\iota \chi_1(u) = {}^\iota \chi_2(u)$ for all $u \in \cO^\times$; assuming this to be the case we get: 
\begin{multline}
\label{eqn:x-not-in-p-m}
\int_{x \notin \cP^{-m}} f(
\begin{pmatrix} & -1\\ 1 & \end{pmatrix} \begin{pmatrix} 1 & x \\  & 1\end{pmatrix}k) \, dx \ = \\ 
{\rm vol}(\cO^\times) \cdot {}^\iota \chi_1(\varpi^{m}){}^\iota\chi_2(\varpi^{-m}) q^m \cdot 
\left(1 - {}^\iota \chi_1(\varpi){}^\iota\chi_2(\varpi^{-1}) q\right)^{-1} \cdot f(k). 
\end{multline}
For $G = \GL(2)$, the point of evaluation $k = -1$, and $\left(1 - {}^\iota \chi_1(\varpi){}^\iota\chi_2(\varpi^{-1}) q\right)^{-1} $ is nothing but 
$L(s, {}^\iota \chi_1 \otimes {}^\iota\chi_2, \tilde{r}) = L(s, {}^\iota \chi_1 \otimes {}^\iota\chi_2^{-1})$ evaluated at this point of evaluation. 
Putting \eqref{eqn:x-in-p-m} and \eqref{eqn:x-not-in-p-m} together, one sees that $T(f)(k)$ is a finite-sum: 
\begin{multline}
\label{eqn:T-finite-sum}
{\rm vol}(\cP^m) 
\sum_{a \in \cP^{-m}/\cP^m} f(
\begin{pmatrix} & -1\\ 1 & \end{pmatrix} \begin{pmatrix} 1 & a \\  & 1\end{pmatrix} k) \ + \\ 
\delta(\chi_1^\circ, \chi_2^\circ) \cdot 
{\rm vol}(\cO^\times) \cdot {}^\iota \chi_1(\varpi^{m}){}^\iota\chi_2(\varpi^{-m}) q^m \cdot 
L(-1, {}^\iota \chi_1 \otimes {}^\iota\chi_2^{-1}) \cdot f(k). 
\end{multline}

\medskip

For brevity, let $\fI = \aInd_B^G({}^\iota\chi_1 \otimes {}^\iota\chi_2)^{K(m)}$ and $\tilde\fI = \aInd_B^G({}^\iota\chi_2(1) \otimes {}^\iota\chi_1(-1))^{K(m)}$; 
these induced representations admit a natural $E$-structures; define
$$
\fI_0 \ := \ \aInd_B^G(\chi_1 \otimes \chi_2)^{K(m)}, \quad \tilde\fI_0 \ := \ \aInd_B^G(\chi_2(1) \otimes \chi_1(-1))^{K(m)}. 
$$
If $\chi : F^\times \to E^\times$ is a locally constant homomorphism then for any integer $n$ we denote $\chi(n) = \chi\otimes |\ |^n$ the $E$-valued character: $u \mapsto \chi(u)$ for 
all $u \in \cO^\times$ and $\varpi \mapsto q^{-n}\chi(\varpi)$. 
It is clear then that $\fI = \fI_0 \otimes_{E, \iota} \C$ and $\tilde\fI = \tilde\fI_0 \otimes_{E, \iota} \C.$ Note that $\fI_0$ consists of all $E$-valued functions in 
$\fI$; similarly, $\tilde\fI_0.$ The local $L$-value that appears in \eqref{eqn:T-finite-sum} is $E$-rational, i.e., 
$L(-1, {}^\iota \chi_1 \otimes {}^\iota\chi_2^{-1}) = \left(1 - {}^\iota \chi_1(\varpi){}^\iota\chi_2(\varpi^{-1}) q\right)^{-1}  \in \iota(E),$
and furthermore, if $L_0(-1, \chi_1 \otimes \chi_2^{-1}) = \left(1 - \chi_1(\varpi)\chi_2(\varpi^{-1}) q\right)^{-1} \in E$ then 
$\iota(L_0(-1, \chi_1 \otimes \chi_2^{-1})) = L(-1, {}^\iota \chi_1 \otimes {}^\iota\chi_2^{-1}).$ 
It is clear now from \eqref{eqn:T-finite-sum} that $T(\fI_0) \subset \tilde\fI_0;$ also that if $T_0 = T|_{\fI_0}$ then $T = T_0 \otimes_{E, \iota} \C.$

\bigskip
\bigskip
\bibliographystyle{plain}

\begin{thebibliography}{99}
{\Small

\smallskip
\bibitem{atobe-gan}
{\sc Atobe,\,H.}; {\sc Gan,\,W.\,T.},
\textit{On the local Langlands correspondence and Arthur conjecture for even orthogonal groups.}
Representation Theory,  21, p.\,354--415 (2017). 

\smallskip
\bibitem{bernstein-zelevinsky}
{\sc Bernstein,\,I.\,N.}; {\sc Zelevinsky,\,A.\,V.},  
{\it Induced representations of reductive $\mathfrak{p}$-adic groups.\,I}, Ann.\,Sci.\,\'Ecole Norm.\,Sup., (4) 10 (1977), no.\,4, 441--472. 

\smallskip 
\bibitem{bhagwat-raghuram} 
{\sc Bhagwat,\,C.}; {\sc Raghuram,\,A.}, 
{\it Eisenstein cohomology for orthogonal groups and the special values of L-functions for $\GL(1) \times {\rm O}(2n)$,} preprint (2020). 
{\tt arXiv:2005.01062} 

\smallskip 
\bibitem{borel-corvallis} 
{\sc Borel,\,A.},  
{\it Automorphic L-functions.} 
Automorphic forms, representations and L-functions (Proc. Sympos. Pure Math., Oregon State Univ., Corvallis, Ore., 1977), Part 2, pp. 27?61, Proc. Sympos. Pure Math., XXXIII, Amer. Math. Soc., Providence, R.I., 1979. 

\smallskip
\bibitem{clozel}
{\sc Clozel,\,L.}, 
{\it Motifs et formes automorphes: applications du principe de 
fonctorialit\'e.} (French) [Motives and automorphic forms: applications of 
the functoriality principle].  
Automorphic forms, Shimura varieties, and $L$-functions, Vol.\,I (Ann Arbor, MI, 1988), 
77--159, Perspect.\,Math., 10, Academic Press, Boston, MA, 1990. 

\smallskip
\bibitem{deligne}
{\sc Deligne,\,P.}, 
{\it Valeurs de fonctions $L$ et p\'eriodes d'int\'egrales (French),}
With an appendix by N. Koblitz and A. Ogus. Proc.  Sympos. Pure Math.,
XXXIII, Automorphic forms, representations and $L$-functions (Proc.
Sympos. Pure Math., Oregon State Univ., Corvallis, Ore., 1977), Part 2,
pp. 313--346, Amer. Math. Soc., Providence, R.I., 1979.

\smallskip 
\bibitem{gan-raghuram}
{\sc Gan,\,W.\,T.}; {\sc Raghuram,\,A.}, 
{\it Arithmeticity for periods of automorphic forms.} 
Automorphic Representations and L-functions, 187--229, Tata Inst.\,Fund.\,Res.\,Stu., Math.\,No.\,22, TIFR, Mumbai (2013).

\smallskip 
\bibitem{harder-raghuram-book}
 {\sc Harder,\,G.}; {\sc Raghuram,\,A.}, 
 {\it Eisenstein cohomology for $\GL_N$ the special values of Rankin--Selberg L-functions.}
 Annals of Math.\,Studies, Vol.\,203, Princeton University Press (2020). 
 
\smallskip
\bibitem{heiermann-opdam}
{\sc Heiermann,\,V.}; {\sc Opdam,\,E.}, 
{\it On the tempered L-functions conjecture.} Amer.\,J.\,Math., 135 (2013), no.\,3, 777--799.  
 
\smallskip
\bibitem{jacquet}
{\sc Jacquet,\,H.}, 
{\it Generic representations,} 
in Non-Commutative harmonic analysis, LNM 587, Springer Verlag, 91--101 (1977). 
  
\smallskip
\bibitem{kim-notes}
{\sc Kim,\,H.}, 
{\it Automorphic L-functions.} 
Lectures on automorphic L-functions, 97--201, Fields Inst. Monogr., 20, Amer. Math. Soc., Providence, RI, 2004. 
 
\smallskip
\bibitem{kudla}
{\sc Kudla,\,S.}, 
{\it The local Langlands correspondence: the non-Archimedean case.} 
Motives (Seattle, WA, 1991), 365--391, Proc. Sympos. Pure Math., 55, Part 2, Amer. Math. Soc., Providence, RI, 1994. 
 
\smallskip 
\bibitem{muthu-raghuram}
{\sc Krishnamurthy,\,M.}; {\sc Raghuram,\,A.}, 
{\it Eisenstein cohomology for unitary groups and the arithmetic of Asai $L$-functions,} preprint in preparation. 

\smallskip 
\bibitem{langlands-euler}
{\sc Langlands,\,R.}, 
{\it Euler products.} 
A James K.\,Whittemore Lecture in Mathematics given at Yale University, 1967. Yale Mathematical Monographs, 1. 
Yale University Press, New Haven, Conn.-London, (1971).

\smallskip
\bibitem{moeglin-tadic}
{\sc M\oe glin,\,C.}; {\sc Tadi\'c,\,M.}, 
{\it Construction of discrete series for classical $p$-adic groups.}
J.\,Amer.\,Math.\,Soc., 15 (2002), no.\,3, 715--786.

\smallskip
\bibitem{prasad-raghuram}
{\sc Prasad,\,D.}; {\sc Raghuram,\,A.}, 
{\it Representation theory of $GL(n)$ over non-Archimedean local fields.}  
School on Automorphic Forms on $GL(n)$, 159--205, ICTP Lecture Notes, 21, Abdus Salam Int.\,Cent.\,Theoret.\,Phys., Trieste, 2008.

\smallskip 
\bibitem{raghuram-imrn} 
{\sc Raghuram,\,A.}, 
{\it On the special values of certain Rankin-Selberg $L$-functions and applications to odd symmetric power $L$-functions of modular forms,} 
Int. Math. Res. Not., Vol.\,2010 (2010) 334--372, doi:10.1093/imrn/rnp127

\smallskip 
\bibitem{raghuram} 
\leavevmode\vrule height 2pt depth -1.6pt width 23pt,
{\it Eisenstein cohomology for $\GL_N$ the special values of Rankin--Selberg L-functions-II,} preprint (2020). 

\smallskip
\bibitem{shahidi-ajm81}
{\sc Shahidi,\,F.}, 
{\it On certain L-functions.} Amer.\,J.\,Math., 103 (1981), no.\,2, 297--355.

\smallskip
\bibitem{shahidi-annals88}
\leavevmode\vrule height 2pt depth -1.6pt width 23pt,
{\it On the Ramanujan conjecture and finiteness of poles for certain L-functions.} 
Ann.\,of Math., (2) 127 (1988), no.\,3, 547--584.

\smallskip
\bibitem{shahidi-annals90}
\leavevmode\vrule height 2pt depth -1.6pt width 23pt, 
{\it A proof of Langlands' conjecture on Plancherel measures; complementary series for p-adic groups.} 
Ann.\,of Math. (2) 132 (1990), no. 2, 273--330.

\smallskip
\bibitem{shahidi-asian}
\leavevmode\vrule height 2pt depth -1.6pt width 23pt,
{\it On the Ramanujan conjecture for quasisplit groups.}
Asian J. Math. 8 (2004), no. 4, 813--835.

\smallskip
\bibitem{shahidi-kyoto}
\leavevmode\vrule height 2pt depth -1.6pt width 23pt,
{\it Arthur packets and the Ramanujan conjecture.} 
Kyoto J.\,Math., 51 (2011), no. 1, 1--23.

\smallskip
\bibitem{shahidi-book}
\leavevmode\vrule height 2pt depth -1.6pt width 23pt,
{\it Eisenstein series and automorphic L-functions.} 
American Mathematical Society Colloquium Publications, 58. 
American Mathematical Society, Providence, RI, (2010). 

\smallskip
\bibitem{waldspurger}
{\sc Waldspurger,\,J.-L.}, 
{\it La formule de Plancherel pour les groupes p-adiques (d'apr\`es Harish-Chandra).} 
J. Inst. Math. Jussieu 2 (2003), no. 2, 235--333. 


}\end{thebibliography}

\bigskip

\bigskip

\end{document}